\documentclass[a4paper,12pt]{amsart}
\usepackage{amsmath,amsxtra}
\usepackage{amssymb}
\usepackage{mathdots}
\usepackage[all]{xy}
\bibliographystyle{amsalpha}
\setlength{\topmargin}{-15mm}
\setlength{\oddsidemargin}{-6mm}
\setlength{\evensidemargin}{-6mm}
\setlength{\textheight}{24.5cm}
\setlength{\textwidth}{17.6cm}
\setlength{\footskip}{1cm} 
\pagestyle{plain}
\usepackage[dvipdfmx,%
bookmarks=true,%
bookmarksnumbered=true,%
%colorlinks=true,%
setpagesize=false,%
linkcolor=blue,%
pdftitle={On conformal pseudo-subriemannian fundamental graded Lie algebras of semisimple type},%
pdfauthor={Tomoaki Yatsui},%
pdfsubject={graded Lie algebra},%
pdfkeywords={TeX; dvipdfmx; hyperref; color;}
]{hyperref}
\theoremstyle{plain}
\newtheorem{theorem}{Theorem}[section]
\newtheorem{lemma}{Lemma}[section]
\newtheorem{proposition}{Proposition}[section]

\theoremstyle{definition} 
\newtheorem{remark}{Remark}[section] 
 
\newtheorem{example}{Example}[section] 
\newtheorem{corollary}{Corollary}[section] 
\newcommand{\gl}{\mathfrak{gl}}

\newcommand{\ad}{\operatorname{ad}} 
\newcommand{\rea}{\operatorname{Re}}

\newcommand{\ima}{\operatorname{Im}} 
\newcommand{\cupprod}{\cup}
\newcommand{\capprod}{\cap}
\newcommand{\comp}{\circ} 
\newcommand{\rank}{\operatorname{rank}}

\newcommand{\Aut}{\operatorname{Aut}}
\newcommand{\gla}[1]{\mathfrak#1=\bigoplus\limits_{p\in\mathbb Z}\mathfrak#1_p}

\renewcommand{\labelenumi}{(\arabic{enumi})\,}

\makeatletter
\newcommand\ackname{Acknowledgements}
\if@titlepage
  \newenvironment{acknowledgements}{%
      \titlepage
      \null\vfil
      \@beginparpenalty\@lowpenalty
      \begin{center}%
        \bfseries \ackname
        \@endparpenalty\@M
      \end{center}}%
     {\par\vfil\null\endtitlepage}
\else
  
\fi
\makeatother
    \makeatletter
    \def\@citex[#1]#2{\leavevmode
      \let\@citea\@empty
      \@cite{\bfseries\@for\@citeb:=#2\do
        {\@citea\def\@citea{,\penalty\@m\ }%
         \edef\@citeb{\expandafter\@firstofone\@citeb\@empty}%
         \if@filesw\immediate\write\@auxout{\string\citation{\@citeb}}\fi
         \@ifundefined{b@\@citeb}{\hbox{\reset@font\bfseries ?}%
           \G@refundefinedtrue
           \@latex@warning
             {Citation `\@citeb' on page \thepage \space undefined}}%
           {\@cite@ofmt{\csname b@\@citeb\endcsname}}}}{{\rm#1}}}
    \makeatother
\title{Conformal pseudo-subriemannian fundamental graded Lie algebras of semisimple type}

\author{Tomoaki Yatsui}

\address{Masakae 1-9-2, Otaru, 047-0003, Japan}
\email{yatsui@frontier.hokudai.ac.jp}
\begin{document}
\maketitle

%%
%% End of file `elsarticle-template-num.tex'.
\begin{abstract}
%% Text of abstract
We introduce the notion of a conformal pseudo-subriemannian fundamental graded 
Lie algebra of semisimple type. 
Moreover we give a classification of conformal pseudo-subriemannian 
fundamental graded Lie algebras of semisimple type and their prolongations. 
\end{abstract}

\section{Introduction and notation}
This paper is the sequel to the previous one \cite{yat15:01}. 
We first recall the notion of fundamental graded Lie algebras. 
Moreover we 
define the notion of conformal pseudo-subriemannian 
fundamental graded Lie algebras, which is a generalization of 
conformal subriemannian fundamental graded Lie algebras. 

A graded Lie algebra (GLA) $\mathfrak m=\bigoplus\limits_{p<0}\mathfrak g_p$ 
is called a fundamental graded Lie algebra (FGLA) 
if it is a finite dimensional graded Lie algebra generated by nonzero subspace 
$\mathfrak g_{-1}$. An FGLA $\mathfrak m$ is said to be of the $\mu$-th kind if 
$\mathfrak g_{-\mu}\ne\{0\}$ and $\mathfrak g_p=\{0\}$ for $p<-\mu$. 
Let $\mathfrak m=\bigoplus\limits_{p<0}\mathfrak g_p$ be 
an FGLA over $\mathbb R$ such that $\mathfrak g_{-2}\ne\{0\}$, and let $[g]$ be the conformal class of a nondegenerate symmetric bilinear form 
$g$ on $\mathfrak g_{-1}$. 
Then the pair $(\mathfrak m,[g])$ is called a conformal pseudo-subriemannian 
FGLA. 
In particular
if $g$ is positive definite, then $(\mathfrak m,[g])$ is 
called a conformal subriemannian FGLA. 
Also if the signature of $g$ has the form $(r,r)$, then 
$(\mathfrak m,[g])$ is called a conformal neutral-subriemannian 
FGLA. 
Note that if $(\mathfrak m,[g])$ is a conformal pseudo-subriemannian 
FGLA, so is $(\mathfrak m,[-g])$. 
Given two conformal pseudo-subriemannian FGLAs $(\mathfrak m_1,[g_1])$ and 
$(\mathfrak m_2,[g_2])$ we say that $(\mathfrak m_1,[g_1])$ 
is isomorphic to $(\mathfrak m_2,[g_2])$ if 
there exists a graded Lie algebra isomorphism $\varphi$ of $\mathfrak m_1$ 
onto $\mathfrak m_2$ such that $[\varphi^*g_2]=[g_1]$. 
Also we say that $(\mathfrak m_1,[g_1])$ 
is equivalent to $(\mathfrak m_2,[g_2])$ if 
$(\mathfrak m_1,[g_1])$ is isomorphic to $(\mathfrak m_2,[g_2])$ 
or $(\mathfrak m_2,[-g_2])$.  

Let $(\mathfrak m,[g])$ be a conformal pseudo-subriemannian FGLA, 
and let $\mathfrak g_0$ be the Lie algebra consisting of all the derivations 
$D$ of $\mathfrak m$ satisfying the following conditions: 
(1) $D(\mathfrak g_p)\subset \mathfrak g_p$ for all $p<0$; 
(2) $D|\mathfrak g_{-1}\in\mathfrak{co}(\mathfrak g_{-1},g)$. 
There exists 
a GLA $\gla l$ such that:
(i) $\mathfrak g_p=\mathfrak l_p$ for $p\leqq0$; 
(ii) $\gla l$ is transitive, i.e., for $X\in\mathfrak l_p$, $p\geqq0$, if $[X,\mathfrak l_{-1}]=\{0\}$, 
then $X=0$; (iii) $\gla l$ is maximum among GLAs satisfying 
conditions (i) and (ii) above, which is called the prolongation of 
$(\mathfrak m,[g])$ (For more details on the prolongation, 
see \cite[\S5]{tan70:1}). 
Note that the prolongation of $(\mathfrak m,[g])$ is finite dimensional 
(Lemma \ref{lem32}). 
Clearly the prolongation of $(\mathfrak m,[g])$ coincides with 
that of $(\mathfrak m,[-g])$. 

It is known (\cite{co15:01}, \cite{yat15:01}) that 
the prolongation $\gla g$ of a conformal subriemannian FGLA 
$(\mathfrak m,[g])$ satisfying the condition $\mathfrak g_1\ne\{0\}$ is 
a real rank one simple graded Lie algebra . 
In contrast, there exists a conformal neutral-subriemannian FGLA 
$(\mathfrak m,[g])$ such that the prolongation $\gla g$ of 
$(\mathfrak m,[g])$ is nonsemisimple and such that $\mathfrak g_1\ne\{0\}$ 
(cf. Example \ref{exa51}). 
A conformal pseudo-subriemannian FGLA is said to be of semisimple type 
if the prolongation is semisimple. 
In this paper we give a classification of 
conformal pseudo-subriemannian FGLAs of semisimple type 
and their prolongations (Theorem \ref{thm52}). 
In particular we prove that the prolongation of 
a conformal pseudo-subriemannian FGLA of semisimple type is simple.  
Also we give a classification of conformal neutral-subriemannian FGLAs of 
semisimple type (Corollary \ref{cor51}). 
%\medskip
%\noindent
%\textbf{Notation and conventions}
\subsection*{Notation and conventions}
\begin{enumerate}
\item 
Blackboard bold is used for the standard systems 
$\mathbb Z$ (the ring of integers), $\mathbb R$ (real numbers),
$\mathbb C$ (complex numbers), $\mathbb C'$ (split complex numbers),
the real division rings $\mathbb H$ (Hamilton's quaternions), 
$\mathbb H'$ (split quaternions), 
$\mathbb O$ (Cayley's [nonassociative] octonions) 
and $\mathbb O'$ (split octonions). 
We denote by $\mathbb R_{>0}$ (resp. $\mathbb R_{\geqq0}$) 
the set consisting of all the positive real numbers 
(resp. non-negative real numbers). 
For $\mathbb K=\mathbb C$, $\mathbb C'$, $\mathbb H$, $\mathbb H'$, 
$\mathbb O$ or $\mathbb O'$, we set 
$\ima \mathbb K=\{~z\in\mathbb K:\rea z=0~\}$. 
  
\item For any real vector space $V$ 
we denote by $V(\mathbb C)$ the complexification of $V$. 

\item Let $V$ be a finite dimensional real vector space, 
and let $g$ be a nondegenerate symmetric bilinear form on $V$. We set 
$$\begin{aligned}
&\mathfrak{so}(V,g)=\{\,A\in \mathfrak{gl}(V):A\cdot g=0\,\}, \\
&\mathfrak{co}(V,g)=\{\,A\in \mathfrak{gl}(V):A\cdot g=\eta_Ag \ 
\text{for some} \ \eta_A\in\mathbb R\,\}, \\
\end{aligned}$$ 
where $A\cdot g$ is a symmetric bilinear form on $V$ defined 
by 
$(A\cdot g)(x,y)=g(Ax,y)+g(x,Ay)$ $(x,y\in V)$. 
We define a linear mapping $g^\flat:V\to V^*$ by 
$g^\flat(x)(y)=g(x,y)$ $(x,y\in V)$. 
Since $g$ is non-degenerate, $g^\flat$ is a linear isomorphism. 
We denote by $g^\sharp$ the inverse mapping of $g^\flat$. 

\item For a graded vector space $V=\bigoplus\limits_{p\in\mathbb Z}V_p$ and $k\in\mathbb Z$ 
we denote subspaces $\bigoplus\limits_{p\leqq k}V_p$ and 
$\bigoplus\limits_{p\geqq k}V_p$ by 
$V_{\leqq k}$ and $V_{\geqq k}$ respectively. 
Also 
we denote the subspace $\bigoplus\limits_{p<0}V_p$ by $V_-$. 
We call $V_-$ the negative part of $V$. 

\item 
For a reductive Lie algebra $\mathfrak g$, 
we denote by $\mathfrak g^{ss}$ the semisimple part of $\mathfrak g$.  

\item For a GLA $\gla g$ we denote by $\Aut_0(\mathfrak g)$ 
the group consisting of all the automorphisms $a$ of $\mathfrak g$ such that 
$a(\mathfrak g_p)=\mathfrak g_p$ for all $p\in\mathbb Z$. 
\end{enumerate}
%section4
\section{Finite dimensional semisimple graded Lie algebras}
\subsection{Finite dimensional complex semisimple graded Lie algebras}
Let $\gla g$ be a complex semisimple GLA 
such that the negative part $\mathfrak g_-$ is an FGLA. 
Let $\mathfrak h$ be a Cartan subalgebra of $\mathfrak g_0$; then $\mathfrak h$ is a Cartan subalgebra of $\mathfrak g$ such that $E\in \mathfrak h$, where 
$E$ is the characteristic element of $\gla g$ (i.e., 
$E$ is an element of $\mathfrak g_0$ such that 
$[E,X]=pX$ for $X\in\mathfrak g_p$). 
Let $\Delta$ be 
a root system of $(\mathfrak g,\mathfrak h)$. For $\alpha\in\Delta$, 
we denote by $\mathfrak g^{\alpha}$ the root space corresponding to 
$\alpha$. 
We associate to any set of roots $Q\subset \Delta$ a subspace 
$$\mathfrak g(Q)=\sum_{\alpha\in Q}\mathfrak g^\alpha\subset\mathfrak g.$$
There exists a simple root system $\Pi=\{\alpha_1,\dots,\alpha_l\}$ 
of $(\mathfrak g,\mathfrak h)$ 
such that $\mathfrak g(\Pi)\subset\bigoplus\limits_{p\geqq0}\mathfrak g_p$ 
(\cite[p. 441]{yam93:1}). 
Clearly $\alpha_i(E)$ is a non-negative integer. 
Since the negative part $\mathfrak g_-$ is generated by $\mathfrak g_{-1}$, 
$\alpha_i(E)$ is 0 or 1 (\cite[Lemma 3.8]{yam93:1}). 
We put $\Delta_p=\{\,\alpha\in\Delta: \alpha(E)=p\,\}$ and 
$\Pi_p=\Delta_p\capprod \Pi$; then $\Pi=\Pi_0\cupprod \Pi_1$. 
When $\gla g$ is a simple graded Lie algebra (SGLA), 
we enumerate simple roots of $\mathfrak g$ as in \cite{bou68:1}. 
Moreover if $\mathfrak g$ has the Dynkin diagram of type $X_l$, 
then $\gla g$ 
is said to be of type $(X_l,\Pi_1)$. 
\par
For $\gamma\in \Pi_1$, we put 
$$
\Delta_{-1}(-\gamma)=\{~-\gamma+(\Delta_0\cupprod \{0\})~\}\capprod \Delta
=\{~\alpha=-\gamma+\beta\in\Delta:\beta\in \Delta_0\cupprod \{0\}~\}.
$$
\begin{proposition}[{\cite[Ch.3, \S3.5]{gov94:0} and 
\cite[Proposition 7.3]{amt06:01}}] 
The decomposition of the $\mathfrak g_0$-module $\mathfrak g_{-1}$ 
into irreducible submodules is given by 
$$\mathfrak g_{-1}=\bigoplus_{\gamma\in \Pi_1}
\mathfrak g(\Delta_{-1}(-\gamma)).$$
In particular the 
$\mathfrak g_0$-module $\mathfrak g_{-1}$ is completely reducible. 
Moreover $\mathfrak g(\Delta_{-1}(-\gamma))$ is an irreducible $\mathfrak g_0$-module 
with highest weight $-\gamma$. 
\end{proposition}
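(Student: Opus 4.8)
The plan is to reduce everything to bookkeeping with root spaces. Since $E\in\mathfrak h$, the subalgebra $\mathfrak g_0=\mathfrak h\oplus\mathfrak g(\Delta_0)$ is reductive; writing $\mathfrak g_0=\mathfrak z(\mathfrak g_0)\oplus\mathfrak g_0^{ss}$, we have $\mathfrak z(\mathfrak g_0)\subset\mathfrak h$ and, by a standard fact about parabolic subalgebras, $\Delta_0$ is the root system of $\mathfrak g_0^{ss}$ with simple root system $\Pi_0$. Because $\mathfrak z(\mathfrak g_0)\subset\mathfrak h$ and $\mathfrak g_0^{ss}$ is semisimple, every finite dimensional $\mathfrak g_0$-module is completely reducible; applied to $\mathfrak g_{-1}=\mathfrak g(\Delta_{-1})$ this already yields the ``in particular'' assertion. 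Next I would record how the root set splits: every $\alpha\in\Delta_{-1}$ is a negative root (positive roots take non-negative values on $E$ by the choice of $\Pi$), say $\alpha=-\sum_i n_i\alpha_i$ with $n_i\in\mathbb Z_{\geqq0}$, and $\alpha(E)=-1$ forces $\sum_{\alpha_i\in\Pi_1}n_i=1$; hence precisely one $\gamma\in\Pi_1$ occurs in $\alpha$, and with coefficient $-1$, so $\alpha+\gamma\in\mathbb Z\Pi_0$ and $\alpha\in\Delta_{-1}(-\gamma)$ for that $\gamma$ alone. Thus $\Delta_{-1}=\bigsqcup_{\gamma\in\Pi_1}\Delta_{-1}(-\gamma)$ and $\mathfrak g_{-1}=\bigoplus_{\gamma\in\Pi_1}\mathfrak g(\Delta_{-1}(-\gamma))$ as vector spaces; and each $\mathfrak g(\Delta_{-1}(-\gamma))$ is $\mathfrak g_0$-stable, because bracketing with a root vector of $\mathfrak g_0$ (whose root lies in $\mathbb Z\Pi_0$) leaves the coefficient of $\gamma$ unchanged.

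The second step is to exhibit a highest weight vector in each summand. Put $\Delta_0^+=\Delta_0\capprod(\mathbb Z_{\geqq0}\Pi_0)$ and $\mathfrak b_0=\mathfrak h\oplus\mathfrak g(\Delta_0^+)$, a Borel subalgebra of $\mathfrak g_0$. For $\gamma\in\Pi_1$ and $\beta\in\Delta_0^+$ the weight $\beta-\gamma$ has coefficient $-1$ on $\gamma$ and non-negative, not all zero, coefficients on $\Pi_0$, hence is neither a root nor $0$; so $[\mathfrak g(\Delta_0^+),\mathfrak g^{-\gamma}]=\{0\}$, i.e. $\mathfrak g^{-\gamma}$ is a $\mathfrak b_0$-highest weight space of weight $-\gamma$. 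Set $V_\gamma=U(\mathfrak g_0)\,\mathfrak g^{-\gamma}$. A submodule of a completely reducible module that is generated by a single highest weight vector is irreducible, so $V_\gamma$ is an irreducible $\mathfrak g_0$-module with highest weight $-\gamma$, and obviously $V_\gamma\subset\mathfrak g(\Delta_{-1}(-\gamma))$. Combining this with the first paragraph, the whole Proposition comes down to the single inclusion $\mathfrak g(\Delta_{-1}(-\gamma))\subset V_\gamma$, i.e. to showing that $\mathfrak g^\mu\subset U(\mathfrak g_0)\,\mathfrak g^{-\gamma}$ for every $\mu\in\Delta_{-1}(-\gamma)$.

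This I would prove by downward induction on the height of the positive root $\sigma=-\mu$, where $(\,\cdot,\cdot\,)$ is a Weyl-invariant inner product on the real span of the roots. If $\operatorname{ht}\sigma=1$, then $\sigma\in\Pi_1$ with coefficient $1$ on $\gamma$, so $\sigma=\gamma$ and there is nothing to do. If $\operatorname{ht}\sigma\geqq2$, write $\sigma=\gamma+\beta'$ with $\beta'$ a non-zero non-negative integral combination of $\Pi_0$, and distinguish two cases. If $(\sigma,\alpha)>0$ for some $\alpha\in\Pi_0$ occurring in $\beta'$, then $\sigma-\alpha\in\Delta$ (root-string argument, using $\sigma\neq\alpha$); it is a positive root of the shape $\gamma+(\beta'-\alpha)$, of smaller height, so by induction $\mathfrak g^{-(\sigma-\alpha)}\subset U(\mathfrak g_0)\,\mathfrak g^{-\gamma}$, and since $-\alpha,\ -(\sigma-\alpha),\ -\sigma$ are roots with $-\alpha\in\Delta_0$ we get $\mathfrak g^{-\sigma}=[\mathfrak g^{-\alpha},\mathfrak g^{-(\sigma-\alpha)}]\subset U(\mathfrak g_0)\,\mathfrak g^{-\gamma}$. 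Otherwise $(\sigma,\alpha)\leqq0$ for all $\alpha\in\Pi_0$ occurring in $\beta'$, whence $(\sigma,\beta')\leqq0$ and so $(\sigma,\gamma)\geqq(\sigma,\sigma)>0$; then $\sigma-\gamma=\beta'$ is a root, necessarily in $\Delta_0^+$, and $\mathfrak g^{-\sigma}=[\mathfrak g^{-\beta'},\mathfrak g^{-\gamma}]\subset U(\mathfrak g_0)\,\mathfrak g^{-\gamma}$ because $-\beta'\in\Delta_0$. This closes the induction, identifies $V_\gamma$ with $\mathfrak g(\Delta_{-1}(-\gamma))$, and — together with the first two paragraphs — proves the Proposition.

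The one genuinely delicate point is this last induction. The naive attempt, to peel a simple root of $\Pi_0$ directly off $\sigma$, can fail and invites a tedious case analysis over Dynkin types; the dichotomy above avoids that, using only $(\sigma,\sigma)>0$ together with the two elementary facts that $(\sigma,\alpha)>0$ and $\sigma\neq\alpha$ imply $\sigma-\alpha\in\Delta$, and that $[\mathfrak g^{\lambda},\mathfrak g^{\nu}]=\mathfrak g^{\lambda+\nu}$ whenever $\lambda,\nu,\lambda+\nu\in\Delta$. The rest — reductivity of $\mathfrak g_0$, complete reducibility of $\mathfrak g_{-1}$, and the coefficient bookkeeping — is routine.
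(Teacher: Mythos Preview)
The paper does not supply a proof of this proposition; it is quoted with attribution to \cite[Ch.~3, \S3.5]{gov94:0} and \cite[Proposition 7.3]{amt06:01} and used as input for the rest of the argument, so there is no in-paper proof to compare yours against.

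Your argument is correct and self-contained. The coefficient bookkeeping cleanly yields the disjoint decomposition $\Delta_{-1}=\bigsqcup_{\gamma\in\Pi_1}\Delta_{-1}(-\gamma)$ and the $\mathfrak g_0$-stability of each piece; the observation that $\beta-\gamma$ (for $\beta\in\Delta_0^+$) has mixed-sign simple-root coefficients and hence is neither a root nor zero correctly identifies $\mathfrak g^{-\gamma}$ as a highest weight line; and your height induction, with the dichotomy on the sign of $(\sigma,\alpha)$ for $\alpha\in\Pi_0$ in the support of $\beta'$, is a clean way to avoid type-by-type analysis. The only step that deserves a sentence of justification you left implicit is ``a cyclic highest weight submodule of a completely reducible module is irreducible'': this holds because $U(\mathfrak g_0)\,\mathfrak g^{-\gamma}=U(\mathfrak n_0^-)\,\mathfrak g^{-\gamma}$ has one-dimensional $(-\gamma)$-weight space, so in any decomposition into irreducibles only one summand can carry the (nonzero) projection of the generator. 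With that remark added, the proof is complete and is essentially the standard argument one finds in the cited sources.
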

From \cite[Ch.VIII, \S7, Propositions 11 and 12]{bou75:1} and the table of \cite{bou68:1} 
we obtain the following proposition. 
\begin{proposition} 
Let $\gla g$ be a finite dimensional complex SGLA satisfying the following conditions:
{\rm (i)} the negative part $\mathfrak m$ is an FGLA; 
{\rm (ii)} $\mathfrak g_{-2}$ and the semisimple part $\mathfrak g^{ss}_0$ of $\mathfrak g_0$ are both nonzero; 
{\rm (iii)} there exists a $\mathfrak g^{ss}_0$-invariant nondegenerate symmetric bilinear form $g:\mathfrak g_{-1}\times \mathfrak g_{-1}\to\mathbb C$. 
\begin{enumerate}
\item If the $\mathfrak g_0$-module $\mathfrak g_{-1}$ is irreducible, 
then $\gla g$ is of type 
$(C_l,\{\alpha_2\})$ $(l\geqq3)$ or $(F_4,\{\alpha_4\})$. 
\item If the $\mathfrak g_0$-module $\mathfrak g_{-1}$ is reducible and if 
$\mathfrak g_{-1}$ is the direct sum of two irreducible $\mathfrak g_0$-submodules of $\mathfrak g_{-1}$ which are totally isotropic with respect to $g$, 
then 
$\gla g$ is of type $(A_l,\{\alpha_1,\alpha_l\})$ or 
$(B_l,\{\alpha_1,\alpha_l\})$ $(l\geqq3)$. 
\end{enumerate}
\label{prop22}
\end{proposition}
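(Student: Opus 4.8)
The plan is to translate the hypotheses into the combinatorics of a marked Dynkin diagram by means of the decomposition of $\mathfrak{g}_{-1}$ recalled above, and then to run through the resulting finite list of possibilities using Bourbaki's classification of self-dual, orthogonal, and symplectic representations. By the decomposition $\mathfrak{g}_{-1}=\bigoplus_{\gamma\in\Pi_1}\mathfrak{g}(\Delta_{-1}(-\gamma))$ into irreducible $\mathfrak{g}_0$-modules, the hypothesis in (1) that $\mathfrak{g}_{-1}$ is irreducible is equivalent to $\#\Pi_1=1$, and the situation in (2), in which $\mathfrak{g}_{-1}$ is a direct sum of two irreducible submodules, is equivalent to $\#\Pi_1=2$. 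Since $\mathfrak{g}_-$ is generated by $\mathfrak{g}_{-1}$, the spaces $\mathfrak{g}_{-1},\dots,\mathfrak{g}_{-\mu}$ are all nonzero, where $\mu=\sum_{\gamma\in\Pi_1}n_\gamma$ and $n_\gamma$ denotes the coefficient of $\gamma$ in the highest root of $\mathfrak{g}$; hence the requirement in (ii) that $\mathfrak{g}_{-2}\ne\{0\}$ is equivalent to $\mu\geqq2$, which reads $n_\gamma\geqq2$ when $\#\Pi_1=1$ and is automatic when $\#\Pi_1=2$. The condition $\mathfrak{g}_0^{ss}\ne\{0\}$ says $\Pi_0=\Pi\setminus\Pi_1\ne\emptyset$. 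Finally, since the centre of $\mathfrak{g}_0$ acts by scalars on each $\mathfrak{g}(\Delta_{-1}(-\gamma))$, that space is also an irreducible $\mathfrak{g}_0^{ss}$-module, with highest weight $\lambda_\gamma=-\gamma|_{\mathfrak{h}\cap\mathfrak{g}_0^{ss}}$ whose Dynkin label at $\alpha\in\Pi_0$ is the nonnegative integer $-\langle\gamma,\alpha^\vee\rangle$.

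For part (1), $\Pi_1=\{\gamma\}$ with $n_\gamma\geqq2$ and $\Pi_0\ne\emptyset$, and $\mathfrak{g}_{-1}=\mathfrak{g}(\Delta_{-1}(-\gamma))$ is an irreducible $\mathfrak{g}_0^{ss}$-module. As it is irreducible, a nondegenerate symmetric $\mathfrak{g}_0^{ss}$-invariant bilinear form on it exists if and only if it is self-dual and of orthogonal type: the space of invariant bilinear forms on a self-dual irreducible module is one-dimensional, every nonzero such form is nondegenerate, and it is either symmetric or alternating. I would then go through the finitely many pairs $(X_l,\gamma)$ with $n_\gamma\geqq2$, reading off $\mathfrak{g}_0^{ss}$ (delete the node $\gamma$) and $\lambda_\gamma$, and decide self-duality and the type from \cite[Ch.VIII, \S7, Propositions 11 and 12]{bou75:1} and the tables of \cite{bou68:1}, together with the rule that a tensor product of two orthogonal, or of two symplectic, modules is orthogonal while a tensor product of an orthogonal and a symplectic module is symplectic, and the known types of the vector, exterior-power, and spin representations of the classical factors. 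One finds that $\mathfrak{g}_{-1}$ fails to be self-dual in all but a few cases and, among the remaining ones, is of orthogonal type only for $(C_l,\{\alpha_2\})$ with $l\geqq3$ --- where $\mathfrak{g}_0^{ss}\cong\mathfrak{sl}_2\oplus\mathfrak{sp}_{2(l-2)}$ and $\mathfrak{g}_{-1}\cong\mathbb{C}^2\otimes\mathbb{C}^{2(l-2)}$ is a tensor product of two symplectic modules --- and for $(F_4,\{\alpha_4\})$, where $\mathfrak{g}_0^{ss}\cong\mathfrak{so}_7$ and $\mathfrak{g}_{-1}$ is its $8$-dimensional spin module, which is orthogonal.

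For part (2), $\Pi_1=\{\gamma_1,\gamma_2\}$ with $\Pi_0\ne\emptyset$; put $V_i=\mathfrak{g}(\Delta_{-1}(-\gamma_i))$. Since $V_1$ and $V_2$ are the only irreducible summands of $\mathfrak{g}_{-1}$ and both are totally isotropic for the nondegenerate symmetric $g$, the form $g$ induces a perfect $\mathfrak{g}_0^{ss}$-invariant pairing $V_1\times V_2\to\mathbb{C}$, so $V_2\cong V_1^*$; conversely, if $V_2\cong V_1^*$ then the hyperbolic form on $V_1\oplus V_1^*$ supplies such a $g$. Hence the hypothesis of (2) amounts to $\#\Pi_1=2$ together with $V_2\cong V_1^*$ as $\mathfrak{g}_0^{ss}$-modules, that is, $\lambda_{\gamma_2}=-w_0\lambda_{\gamma_1}$, where $w_0$ is the longest element of the Weyl group of $\mathfrak{g}_0^{ss}$; writing $\sigma=-w_0$ for the corresponding automorphism of the diagram $\Pi_0$, this is the condition $\langle\gamma_2,\alpha^\vee\rangle=\langle\gamma_1,\sigma(\alpha)^\vee\rangle$ for all $\alpha\in\Pi_0$. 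I would then go through the pairs $\{\gamma_1,\gamma_2\}$ in each simple type; using that $\mathfrak{g}_0^{ss}\ne\{0\}$ forces $l\geqq3$, this identity of labels holds only for $(A_l,\{\alpha_1,\alpha_l\})$ and $(B_l,\{\alpha_1,\alpha_l\})$ with $l\geqq3$, in both of which $\mathfrak{g}_0^{ss}\cong\mathfrak{sl}_{l-1}$, $V_1\cong\mathbb{C}^{l-1}$, and $V_2\cong(\mathbb{C}^{l-1})^*$.

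The main work is the case-by-case checking of the last two paragraphs; the conceptual ingredients --- the decomposition of $\mathfrak{g}_{-1}$ and the Bourbaki tables --- are standard to invoke. The one point that needs care is in part (2): the isomorphism $V_2\cong V_1^*$ must be tested with the \emph{exact} values of the Dynkin labels, not merely with their supports. For example, $(C_l,\{\alpha_1,\alpha_l\})$ has the highest weights of $V_1$ and $V_2$ supported at the two ends of the $A_{l-2}$-diagram $\Pi_0$, and the support of $V_1^*$ coincides with that of $V_2$; but the double bond at $\alpha_l$ forces the label of $\lambda_{\gamma_2}$ there to equal $2$ whereas that of $V_1^*$ equals $1$, so $(C_l,\{\alpha_1,\alpha_l\})$ is (correctly) excluded. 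This is exactly what distinguishes the admissible $B_l$ from the inadmissible $C_l$.
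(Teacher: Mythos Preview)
Your approach is exactly the one the paper has in mind: the paper's entire justification for this proposition is the sentence ``From \cite[Ch.~VIII, \S7, Propositions 11 and 12]{bou75:1} and the table of \cite{bou68:1} we obtain the following proposition,'' and what you have written is precisely a fleshed-out account of how those Bourbaki results and tables are applied via the decomposition $\mathfrak g_{-1}=\bigoplus_{\gamma\in\Pi_1}\mathfrak g(\Delta_{-1}(-\gamma))$. Your translation of the hypotheses into $\#\Pi_1\in\{1,2\}$, the self-duality/orthogonality criterion in part~(1), the duality condition $\lambda_{\gamma_2}=-w_0\lambda_{\gamma_1}$ in part~(2), and the illustrative $B_l$-versus-$C_l$ distinction are all correct and on target; the only thing left implicit---as in the paper---is the routine tabular run-through.
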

\begin{remark}
Let $\gla g$ be a complex SGLA of type $(A_2,\{\alpha_1,\alpha_2\})$, 
$(B_2,\{\alpha_1,\alpha_2\})$ or $(G_2,\{\alpha_1,\alpha_2\})$. 
Then the semisimple part of $\mathfrak g_0$ is $\{0\}$. 
We can easily construct a nondegenerate symmetric bilinear form $g$ on $\mathfrak g_{-1}$ satisfying the following condition: 
for any $A\in \mathfrak g_0$ there exists 
a $\lambda_A\in \mathbb C$ 
such that 
$$g([A,X],Y)+g(X,[A,Y])=\lambda_Ag(X,Y)
\quad \text{for all}\  X,Y\in\mathfrak g_{-1}$$ 
(cf. Examples \ref{exa41}, 
\ref{exa42}, \ref{exa44}). 
\label{rem21}
\end{remark}
\subsection{Finite dimensional real semisimple graded Lie algebras}
In this subsection we describe gradations of finite dimensional real 
semisimple GLAs. We first notice the following proposition. 
\begin{proposition}[{\cite[Proposition 3.3]{yam93:1}}]
The finite dimensional real SGLAs $\gla g$ fall into the following 
two distinct classes:
\begin{enumerate}
\renewcommand{\labelenumi}{(\Alph{enumi}) }
\item The complex SGLAs, considered as real Lie algebras; 
\item The real form of complex simple Lie algebra so that 
$\mathfrak g(\mathbb C)
=\bigoplus\limits_{p\in\mathbb Z}\mathfrak g_p(\mathbb C)$ 
is a complex SGLA.
\end{enumerate}
\label{prop23}
\end{proposition}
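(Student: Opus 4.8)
The plan is to reduce the statement to the classical structure theory of real simple Lie algebras and then to track how a fixed $\mathbb Z$-gradation behaves under that structure. Recall the dichotomy: a finite dimensional real simple Lie algebra $\mathfrak g$ is of exactly one of two types --- either $\mathfrak g$ is the realification $\mathfrak s_{\mathbb R}$ of a complex simple Lie algebra $\mathfrak s$, in which case $\mathfrak g(\mathbb C)\cong\mathfrak s\oplus\overline{\mathfrak s}$ is a sum of two simple ideals, or $\mathfrak g(\mathbb C)$ is simple, so that $\mathfrak g$ is a real form of the complex simple Lie algebra $\mathfrak g(\mathbb C)$. I would quote this rather than reprove it; its short justification is that the conjugation of $\mathfrak g(\mathbb C)$ with respect to $\mathfrak g$ permutes the simple ideals of $\mathfrak g(\mathbb C)$ transitively (otherwise a proper stable sum of them would descend to a proper ideal of $\mathfrak g$), so there are at most two such ideals, and exactly two precisely when $\mathfrak g$ is of complex type. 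Since the two types are distinguished by whether $\mathfrak g(\mathbb C)$ is simple, this also shows the classes (A) and (B) in the statement are disjoint.

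Next I would produce the characteristic element of $\gla g$ over $\mathbb R$: the endomorphism $D$ of $\mathfrak g$ with $D|_{\mathfrak g_p}=p\cdot\mathrm{id}$ is a derivation, since $D([X,Y])=(p+q)[X,Y]=[DX,Y]+[X,DY]$ for $X\in\mathfrak g_p$ and $Y\in\mathfrak g_q$; as $\mathfrak g$ is semisimple over $\mathbb R$, we have $D=\operatorname{ad}E$ for a unique $E\in\mathfrak g$, and $D(E)=[E,E]=0$ forces $E\in\mathfrak g_0$. Thus $\mathfrak g_p=\{\,X\in\mathfrak g:[E,X]=pX\,\}$ for every $p$.

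If $\mathfrak g(\mathbb C)$ is simple, complexifying the gradation yields $\mathfrak g(\mathbb C)=\bigoplus_{p\in\mathbb Z}\mathfrak g_p(\mathbb C)$ with $[\mathfrak g_p(\mathbb C),\mathfrak g_q(\mathbb C)]\subset\mathfrak g_{p+q}(\mathbb C)$, which is a complex SGLA; this is class (B). If instead $\mathfrak g=\mathfrak s_{\mathbb R}$ with $\mathfrak s$ complex simple, then $E\in\mathfrak s_{\mathbb R}=\mathfrak s$ and $\operatorname{ad}_{\mathfrak g}E$ is nothing but the $\mathbb C$-linear map $\operatorname{ad}_{\mathfrak s}E$ regarded $\mathbb R$-linearly; hence each $\mathfrak g_p$ is a complex subspace of $\mathfrak s$, because $X\in\mathfrak g_p$ implies $[E,iX]=i[E,X]=p(iX)$, so $iX\in\mathfrak g_p$. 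Therefore $\mathfrak s=\bigoplus_{p\in\mathbb Z}\mathfrak g_p$ is a gradation of $\mathfrak s$ as a complex Lie algebra, and $\gla g$ is exactly this complex SGLA viewed as a real Lie algebra; this is class (A). Together with the disjointness noted above, this establishes the proposition.

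I do not anticipate a genuine obstacle: the only points requiring care are the identification $\operatorname{ad}_{\mathfrak g}E=\operatorname{ad}_{\mathfrak s}E$ in the complex-type case and the resulting $\mathbb C$-stability of each $\mathfrak g_p$, together with a clean citation of the real-simple dichotomy --- no computation is involved.
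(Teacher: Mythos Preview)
The paper does not supply its own proof of this proposition; it is quoted verbatim from \cite[Proposition~3.3]{yam93:1} and used as background. Your argument is correct and self-contained: you invoke the standard dichotomy for real simple Lie algebras (complex type versus absolutely simple, distinguished by whether $\mathfrak g(\mathbb C)$ is simple), produce the characteristic element $E$ via the fact that every derivation of a semisimple Lie algebra is inner, and then in the complex-type case observe that $\ad_{\mathfrak g}E$ is the $\mathbb R$-linear view of the $\mathbb C$-linear map $\ad_{\mathfrak s}E$, so each eigenspace $\mathfrak g_p$ is a $\mathbb C$-subspace of $\mathfrak s$. There is nothing in the present paper to compare against, and your proof would serve as an adequate independent justification of the cited result.
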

Let $\gla g$ be a finite dimensional real semisimple GLA 
such that the negative part $\mathfrak g_-$ is an FGLA. 
Let $E$ be the characteristic element of $\gla g$, and 
$\mathfrak a$ a maximal $\mathbb R$-diagonalizable commutative 
subalgebra of $\mathfrak g$ containing $E$. 
Clearly $\mathfrak a$ is contained in $\mathfrak g_0$. 
There exists a Cartan decomposition $\mathfrak g=\mathfrak k\oplus \mathfrak p$ such that $\mathfrak a\subset \mathfrak p$ (\cite[Proposition 4.1]{gov94:0}). 
Let $\mathfrak h$ be a Cartan subalgebra of $\mathfrak g$ containing $\mathfrak a$. 
The complexification $\mathfrak h(\mathbb C)$ of $\mathfrak h$ 
is a Cartan subalgebra of $\mathfrak g(\mathbb C)$. 
Let $\Delta$ be the root system of 
$(\mathfrak g(\mathbb C),\mathfrak h(\mathbb C))$. 
We set 
$$\begin{aligned}
& \Delta_k=\{~\alpha\in \Delta:\alpha(E)=k~\}\quad (k\in\mathbb Z), \\
& \Delta^\bullet=\{~\alpha\in \Delta:\alpha(\mathfrak a)=\{0\}~\}, 
\quad \Delta^\circ=\Delta\setminus \Delta^\bullet. \\
\end{aligned}$$
Let $\sigma$ be the conjugation of $\mathfrak g(\mathbb C)$ 
defined by its real form $\mathfrak g$. 
For $\lambda\in\mathfrak h(\mathbb C)^*$ we define 
the element $\lambda^\sigma\in \mathfrak h(\mathbb C)^*$ 
by $\lambda^\sigma=\overline{\lambda\comp\sigma}$.  
If $\alpha\in \Delta$, then $\alpha^\sigma\in\Delta$. 
We can choose a simple root system $\Pi$ of 
$(\mathfrak g(\mathbb C),\mathfrak h(\mathbb C))$ such that: (i)  
the corresponding system of positive roots $\Delta^+$ satisfies the following 
conditions: 
$\Delta^+\cap \Delta^\circ$ is $\sigma$-invariant; (ii)  
$\mathfrak g(\Pi)\subset \mathfrak g(\mathbb C)_{\geqq0}$. 
There exists an involutive permutation $\nu$ of the set 
$\Pi^\circ$ such that 
$$\gamma^\sigma=\nu(\gamma)+\sum_{\beta\in \Pi^\bullet}k_\beta\beta
\quad (\gamma\in \Pi^\circ,k_\beta\in\mathbb Z_{\geqq0}).$$
We set 
$\Pi^\bullet= \Delta^\bullet\cap \Pi$, 
$\Pi^\circ=\Delta^\circ\cap \Pi$ and 
$\Pi_k=\Delta_k\cap\Pi$. 
We shall identify the vertices of the Dynkin diagram $X_l$ 
with the elements of $\Pi$. 
The Satake diagram $S_l$ is obtained from $X_l$ as follows: 
Firstly we paint the vertices $\alpha\in \Pi^\bullet$ 
(resp. $\alpha\in \Pi^\circ$) into black (resp. white). 
Secondly for $\alpha\in \Pi^\circ$, 
if $\alpha^\sigma\ne \alpha$, then 
we connect the pair $\{\alpha,\alpha^\sigma\}$ by a curved arrow. 
When this is done for all such pairs, 
we obtain the Satake diagram $S_l$.  

Let $\gla g$ be a finite dimensional real semisimple GLA 
with Satake diagram $S_l$, and let 
$\Delta_k$, $\Pi$ and $\Pi_k$ be as in the above. 
Since $\mathfrak g_-$ is an FGLA, 
$\Pi=\Pi_0\cup \Pi_1$. 
Furthermore the following properties hold: 
(i) $\Pi^\bullet\subset \Pi_0$; 
(ii) $\Pi_1\subset \Pi^\circ$; 
(iii) If $\alpha\in \Pi_1$, then $\alpha^\sigma\in\Delta_1$ 
(\cite[Theorem 8.1]{amt06:01}). 
The semisimple GLA $\gla g$ is said to be of type 
$(S_l,\Pi_1)$ (\cite[\S2]{doj82:0} and \cite[\S3.4]{yam93:1}). 
For simplicity we denote by $\mathfrak g^{\mathbb C}_{-1}(-\gamma)$ 
the subspace $\mathfrak g(\mathbb C)(\Delta_{-1}(-\gamma))$ of 
$\mathfrak g_{-1}(\mathbb C)$, where $\gamma\in\Pi_1$. 
\begin{proposition}[{\cite[Proposition 8.3]{amt06:01}}] 
Let $\gla g$ be a finite dimensional real semisimple GLA of type 
$(S_l,\Pi_1)$. 
For $\gamma\in \Pi_1$, there are two possibilities: 
\begin{enumerate}
\item $\nu(\gamma)=\gamma$. 
Then $-\gamma^\sigma\in\Delta_{-1}(-\gamma)$ 
and the $\mathfrak g_0(\mathbb C)$-module 
$\mathfrak g^{\mathbb C}_{-1}(-\gamma)$ 
is $\sigma$-invariant. 
\item $\nu(\gamma)\ne\gamma$. 
Then $-\gamma^\sigma\in\Delta_{-1}(-\nu(\gamma))$ 
and the two irreducible $\mathfrak g_0(\mathbb C)$-modules 
$\mathfrak g^{\mathbb C}_{-1}(-\gamma)$ and 
$\mathfrak g^{\mathbb C}_{-1}(-\nu(\gamma))$
determine one irreducible $\mathfrak g_0$-submodule 
$\mathfrak g\capprod (\mathfrak g^{\mathbb C}_{-1}(-\gamma)
+\mathfrak g^{\mathbb C}_{-1}(-\nu(\gamma)))$ of $\mathfrak g_{-1}$. 
\end{enumerate}
\label{prop24}
\end{proposition}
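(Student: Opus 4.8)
The plan is to track the action of the conjugation $\sigma$ through the decomposition $\mathfrak g_{-1}(\mathbb C)=\bigoplus_{\gamma\in\Pi_1}\mathfrak g^{\mathbb C}_{-1}(-\gamma)$ of Proposition 2.1. First I would record the elementary compatibilities. Since $E\in\mathfrak a\subset\mathfrak g$ we have $\sigma(E)=E$, and as $\sigma$ is a conjugate-linear Lie algebra automorphism it preserves each $\mathfrak g_p(\mathbb C)$; moreover $\sigma(\mathfrak g(\mathbb C)^\alpha)=\mathfrak g(\mathbb C)^{\alpha^\sigma}$ and $[A,\sigma(X)]=\sigma([\sigma(A),X])$ for $A\in\mathfrak g_0(\mathbb C)$, $X\in\mathfrak g_{-1}(\mathbb C)$. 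From the last identity it follows that $\sigma(\mathfrak g^{\mathbb C}_{-1}(-\gamma))$ is again a $\mathfrak g_0(\mathbb C)$-submodule of $\mathfrak g_{-1}(\mathbb C)$, and it is irreducible since any $\mathfrak g_0(\mathbb C)$-submodule of it is carried by $\sigma$ back to a $\mathfrak g_0(\mathbb C)$-submodule of the irreducible module $\mathfrak g^{\mathbb C}_{-1}(-\gamma)$. Hence, by Proposition 2.1, $\sigma(\mathfrak g^{\mathbb C}_{-1}(-\gamma))=\mathfrak g^{\mathbb C}_{-1}(-\delta)$ for a unique $\delta\in\Pi_1$. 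I would also note at this point that $\nu(\gamma)\in\Pi_1$: evaluating $\gamma^\sigma=\nu(\gamma)+\sum_{\beta\in\Pi^\bullet}k_\beta\beta$ on $E$ and using $\gamma^\sigma\in\Delta_1$ together with $\Pi^\bullet\subset\Pi_0$ gives $\nu(\gamma)(E)=1$. This makes alternatives (1) and (2) exhaustive and mutually exclusive.

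The heart of the argument is to prove $\delta=\nu(\gamma)$. A highest weight vector of $\mathfrak g^{\mathbb C}_{-1}(-\gamma)$ spans the one-dimensional root space $\mathfrak g(\mathbb C)^{-\gamma}$, so its $\sigma$-image is a weight vector of weight $-\gamma^\sigma$ lying in $\mathfrak g^{\mathbb C}_{-1}(-\delta)=\mathfrak g(\mathbb C)(\Delta_{-1}(-\delta))$; therefore $-\gamma^\sigma\in\Delta_{-1}(-\delta)$, i.e. $-\gamma^\sigma=-\delta+\beta_0$ with $\beta_0\in\Delta_0\cup\{0\}$. Combining this with $\gamma^\sigma=\nu(\gamma)+\sum_{\beta\in\Pi^\bullet}k_\beta\beta$ yields $\delta-\nu(\gamma)=\beta_0+\sum_{\beta\in\Pi^\bullet}k_\beta\beta$, which lies in the $\mathbb Z$-span $\mathbb Z\Pi_0$ of $\Pi_0$: indeed $\Pi^\bullet\subset\Pi_0$, and every element of $\Delta_0$ is a $\mathbb Z$-combination of $\Pi_0$ because the $\Pi$-coordinates of a root all have one sign while a root in $\Delta_0$ has the sum of its $\Pi_1$-coordinates equal to $0$. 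Since $\Pi=\Pi_0\sqcup\Pi_1$ is a basis of the root lattice and both $\delta$ and $\nu(\gamma)$ belong to $\Pi_1$, an element of $\mathbb Z\Pi_0$ equal to $\delta-\nu(\gamma)$ can only be $0$; hence $\delta=\nu(\gamma)$. In particular $\sigma(\mathfrak g^{\mathbb C}_{-1}(-\gamma))=\mathfrak g^{\mathbb C}_{-1}(-\nu(\gamma))$ and $-\gamma^\sigma\in\Delta_{-1}(-\nu(\gamma))$.

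The two cases now follow quickly. If $\nu(\gamma)=\gamma$, the above says $\sigma$ preserves $\mathfrak g^{\mathbb C}_{-1}(-\gamma)$ and $-\gamma^\sigma\in\Delta_{-1}(-\gamma)$, which is assertion (1). If $\nu(\gamma)\ne\gamma$, then since $\nu$ is involutive $\sigma$ interchanges $\mathfrak g^{\mathbb C}_{-1}(-\gamma)$ and $\mathfrak g^{\mathbb C}_{-1}(-\nu(\gamma))$, so $W:=\mathfrak g^{\mathbb C}_{-1}(-\gamma)\oplus\mathfrak g^{\mathbb C}_{-1}(-\nu(\gamma))$ is a $\sigma$-stable $\mathfrak g_0(\mathbb C)$-submodule of $\mathfrak g_{-1}(\mathbb C)$, and $\mathfrak g\cap W$ is a real $\mathfrak g_0$-submodule of $\mathfrak g_{-1}$ which is a real form of $W$, so $\dim_{\mathbb R}(\mathfrak g\cap W)=\dim_{\mathbb C}W$. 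For irreducibility I would use that the complex $\mathfrak g_0(\mathbb C)$-submodules of $W$ are exactly $0$, $\mathfrak g^{\mathbb C}_{-1}(-\gamma)$, $\mathfrak g^{\mathbb C}_{-1}(-\nu(\gamma))$ and $W$, the two summands being non-isomorphic since their highest weights $-\gamma$ and $-\nu(\gamma)$ differ; of these only $0$ and $W$ are $\sigma$-stable. Hence for any nonzero $\mathfrak g_0$-submodule $U$ of $\mathfrak g\cap W$ the complexification $U+iU$ is a $\sigma$-stable complex submodule of $W$, so $U+iU=W$, and then $U=\mathfrak g\cap W$ by the dimension count; this is assertion (2).

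The step I expect to be the real obstacle is the identification $\delta=\nu(\gamma)$ in the second paragraph — certifying that the black-root tail $\sum_{\beta\in\Pi^\bullet}k_\beta\beta$ and the correction term $\beta_0$ cannot move $\delta$ away from $\nu(\gamma)$. This is really the statement that the partition $\Pi=\Pi_0\sqcup\Pi_1$ cleanly separates the gradation data from the real-structure data, and it is exactly where the properties $\Pi^\bullet\subset\Pi_0$, $\Pi_1\subset\Pi^\circ$ and $\gamma\in\Pi_1\Rightarrow\gamma^\sigma\in\Delta_1$ recalled just before the proposition enter essentially; everything else is a formal comparison of real and complex module structures.
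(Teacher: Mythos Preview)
The paper does not give its own proof of this proposition: it is quoted verbatim as \cite[Proposition 8.3]{amt06:01} and the text moves directly to \S3. So there is nothing in the paper to compare your argument against.

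That said, your proof is correct and is essentially the standard argument behind the cited result. The key computation --- that $\sigma(\mathfrak g^{\mathbb C}_{-1}(-\gamma))$ is an irreducible $\mathfrak g_0(\mathbb C)$-submodule of $\mathfrak g_{-1}(\mathbb C)$, hence equals some $\mathfrak g^{\mathbb C}_{-1}(-\delta)$ by Proposition~2.1, and then $\delta=\nu(\gamma)$ because $\delta-\nu(\gamma)\in\mathbb Z\Pi_0\cap(\mathbb Z\Pi_1-\mathbb Z\Pi_1)=\{0\}$ --- is exactly the mechanism one expects, and your use of the three properties $\Pi^\bullet\subset\Pi_0$, $\Pi_1\subset\Pi^\circ$, $\gamma\in\Pi_1\Rightarrow\gamma^\sigma\in\Delta_1$ recorded just before the proposition is on target. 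The irreducibility argument in case~(2) is also clean: since $-\gamma\ne-\nu(\gamma)$ the two complex summands are non-isomorphic, so the lattice of complex submodules of $W$ has exactly four elements, only $0$ and $W$ are $\sigma$-stable, and a nonzero real submodule $U\subset\mathfrak g\cap W$ satisfies $U\oplus iU=W$ (the sum is direct because $U\subset\mathfrak g$ and $iU\subset i\mathfrak g$), forcing $U=\mathfrak g\cap W$ by dimension.

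One cosmetic point: in your first paragraph you could make explicit that the irreducible summands in Proposition~2.1 are pairwise non-isomorphic (distinct highest weights), which is what guarantees that $\sigma(\mathfrak g^{\mathbb C}_{-1}(-\gamma))$ equals one of them rather than merely being isomorphic to one; you implicitly use this again in case~(2).
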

\section{Conformal pseudo-subriemannian fundamental graded Lie algebras} 
Let $\mathfrak m=\bigoplus\limits_{p<0}\mathfrak g_p$ be an FGLA 
of the $\mu$-kind over $\mathbb R$, where $\mu\geqq2$. 
Let $g_1$ and $g_2$ be two nondegenerate real symmetric bilinear forms on 
$\mathfrak g_{-1}$. 
We say that 
$g_1$ is equivalent to $g_2$ if 
there exists an $\eta\in\mathbb R_{>0}$ such that $g_2 = \eta g_1$. 
We denote by $[g]$ the equivalence class of a nondegenerate real symmetric bilinear form $g$ on 
$\mathfrak g_{-1}$, which is called the conformal class of $g$. 

Let $g$ be a nondegenerate real symmetric bilinear form on 
$\mathfrak g_{-1}$ with signature $(r,s)$. 
We call the pair $(\mathfrak m,[g])$ a conformal pseudo-subriemannian FGLA of type $(r,s)$. 
In particular, if $s=0$ (resp. $r=s$), then 
$(\mathfrak m,[g])$ is called a conformal subriemannian FGLA 
(resp. a conformal neutral-subriemannian FGLA). 
 
Let $(\mathfrak m,[g])$ be a conformal pseudo-subriemannian FGLA, and let 
$\mathfrak g_0$ be the Lie algebra consisting of all the derivations $D$ of 
$\mathfrak m$ satisfying 
the following conditions (i) and (ii): 
(i) $D(\mathfrak g_p)\subset \mathfrak g_p$ for all $p<0$; 
(ii) $D|\mathfrak g_{-1}\in\mathfrak{co}(\mathfrak g_{-1},g)$. 
Let $\gla g$ be the prolongation of $(\mathfrak m,\mathfrak g_0)$ 
(see \cite[\S5.2]{tan70:1}). 
We call the transitive GLA $\gla g$ the prolongation of 
$(\mathfrak m,[g])$. 
If $\mathfrak g$ is finite dimensional and semisimple, then 
$(\mathfrak m,[g])$ is said to be of semisimple type. 

Let $(\mathfrak m_1,[g_1])$ and $(\mathfrak m_2,[g_2])$ be two 
conformal pseudo-subriemannian FGLAs. 
We say that $(\mathfrak m_1,[g_1])$ is isomorphic to $(\mathfrak m_2,[g_2])$ 
if there exists a graded Lie algebra isomorphism $\varphi$ of $\mathfrak m_1$ onto $\mathfrak m_2$ such that $[\varphi^*g_2]=[g_1]$. 
Also we say that $(\mathfrak m_1,[g_1])$ is equivalent to 
$(\mathfrak m_2,[g_2])$ if $(\mathfrak m_1,[g_1])$ is isomorphic to 
$(\mathfrak m_2,[g_2])$ or $(\mathfrak m_2,[-g_2])$.  

The following lemma can be proved by the same methods as in 
the case of conformal subriemannian FGLAs (\cite[Lemma 3.1]{yat15:01}). 
\begin{lemma}
Let $(\mathfrak m,[g])$ be a conformal pseudo-subriemannian FGLA, 
and let $\gla g$ be the prolongation of $(\mathfrak m,[g])$. 
Let $\rho_{-1}$ be the representation of $\mathfrak g_0$ on $\mathfrak g_{-1}$ 
defined by $\rho_{-1}(A)(X)=[A,X]$ 
$(A\in\mathfrak g_0,X\in \mathfrak g_{-1})$. 
We set 
$\hat{\mathfrak g}_0=(\rho_{-1})^{-1}(\mathfrak{so}(\mathfrak g_{-1},g))$. 
Then 
\begin{enumerate}
\item $[\mathfrak g_0,\mathfrak g_0]\subset \hat{\mathfrak g}_0$. 
\item Let $E$ be the characteristic element of $\gla g$. 
Then $\mathfrak g_0=\mathbb RE\oplus \hat{\mathfrak g}_0$. 
\end{enumerate}
\label{lem31}
\end{lemma}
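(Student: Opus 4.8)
The plan is to reduce everything to the single fact that the quotient $\mathfrak{co}(\mathfrak g_{-1},g)/\mathfrak{so}(\mathfrak g_{-1},g)$ is a one-dimensional abelian Lie algebra, detected by the scalar $\eta$. First I would record that $\rho_{-1}$ is nothing but the restriction to $\mathfrak g_{-1}$ of the adjoint action, hence a homomorphism of Lie algebras $\mathfrak g_0\to\mathfrak{gl}(\mathfrak g_{-1})$ (by the Jacobi identity), and that its image lies in $\mathfrak{co}(\mathfrak g_{-1},g)$: this is exactly condition (ii) in the definition of $\mathfrak g_0$, since each $A\in\mathfrak g_0$ is a derivation of $\mathfrak m$ with $A|\mathfrak g_{-1}\in\mathfrak{co}(\mathfrak g_{-1},g)$ and $\rho_{-1}(A)=A|\mathfrak g_{-1}$. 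For $A\in\mathfrak g_0$ let $\eta_A\in\mathbb R$ be the scalar with $\rho_{-1}(A)\cdot g=\eta_Ag$; then $A\mapsto\eta_A$ is linear, with kernel $\{A\in\mathfrak g_0:\rho_{-1}(A)\in\mathfrak{so}(\mathfrak g_{-1},g)\}=\rho_{-1}^{-1}(\mathfrak{so}(\mathfrak g_{-1},g))=\hat{\mathfrak g}_0$.

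Next I would show $A\mapsto\eta_A$ is a Lie algebra homomorphism into the abelian Lie algebra $\mathbb R$, i.e. $\eta_{[A,B]}=0$. The quickest argument: $\mathrm{id}_{\mathfrak g_{-1}}$ belongs to $\mathfrak{co}(\mathfrak g_{-1},g)$ with associated scalar $2\ne0$ (because $(\mathrm{id}_{\mathfrak g_{-1}}\cdot g)(x,y)=2g(x,y)$) and is central in $\mathfrak{gl}(\mathfrak g_{-1})$; since the linear map $C\mapsto\eta_C$ on $\mathfrak{co}(\mathfrak g_{-1},g)$ has kernel $\mathfrak{so}(\mathfrak g_{-1},g)$ and is surjective, $\mathfrak{co}(\mathfrak g_{-1},g)=\mathbb R\,\mathrm{id}_{\mathfrak g_{-1}}\oplus\mathfrak{so}(\mathfrak g_{-1},g)$, whence $[\mathfrak{co}(\mathfrak g_{-1},g),\mathfrak{co}(\mathfrak g_{-1},g)]=[\mathfrak{so}(\mathfrak g_{-1},g),\mathfrak{so}(\mathfrak g_{-1},g)]\subset\mathfrak{so}(\mathfrak g_{-1},g)$. (Alternatively, $\eta_{[C,D]}=0$ follows from a direct manipulation of $(C\cdot g)(x,y)=g(Cx,y)+g(x,Cy)$, using $C\cdot g=\eta_Cg$, $D\cdot g=\eta_Dg$.) Composing $\rho_{-1}$ with the projection $\mathfrak{co}(\mathfrak g_{-1},g)\to\mathbb R$, $C\mapsto\eta_C$, then yields a Lie algebra homomorphism $\chi:\mathfrak g_0\to\mathbb R$ with $\ker\chi=\hat{\mathfrak g}_0$; since $\mathbb R$ is abelian, $[\mathfrak g_0,\mathfrak g_0]\subset\ker\chi=\hat{\mathfrak g}_0$, which is (1).

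For (2) I would evaluate $\chi$ on the characteristic element $E$. Since $[E,X]=-X$ for $X\in\mathfrak g_{-1}$, we have $\rho_{-1}(E)=-\mathrm{id}_{\mathfrak g_{-1}}$ and $\rho_{-1}(E)\cdot g=-2g$, so $\chi(E)=-2\ne0$; in particular $E\notin\hat{\mathfrak g}_0$, so $\mathbb RE\cap\hat{\mathfrak g}_0=\{0\}$. Moreover, for any $A\in\mathfrak g_0$ the element $A+\tfrac12\chi(A)E$ satisfies $\chi\bigl(A+\tfrac12\chi(A)E\bigr)=\chi(A)-\chi(A)=0$, hence lies in $\hat{\mathfrak g}_0$; therefore $\mathfrak g_0=\mathbb RE+\hat{\mathfrak g}_0$, and combining with the previous line, $\mathfrak g_0=\mathbb RE\oplus\hat{\mathfrak g}_0$.

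I do not expect a genuine obstacle: the argument parallels the conformal subriemannian case. The only place needing a little care is the verification that $A\mapsto\eta_A$ kills brackets, equivalently $[\mathfrak{co}(\mathfrak g_{-1},g),\mathfrak{co}(\mathfrak g_{-1},g)]\subset\mathfrak{so}(\mathfrak g_{-1},g)$; and it should be noted that $\rho_{-1}$ need not be injective, but this is harmless since only the preimage of $\mathfrak{so}(\mathfrak g_{-1},g)$ (and the value $\chi(E)\ne0$) enter the argument.
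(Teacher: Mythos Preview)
Your argument is correct and is exactly the standard one: the linear character $\chi(A)=\eta_A$ on $\mathfrak g_0$ has kernel $\hat{\mathfrak g}_0$, vanishes on commutators because $\mathfrak{co}(\mathfrak g_{-1},g)=\mathbb R\,\mathrm{id}\oplus\mathfrak{so}(\mathfrak g_{-1},g)$, and takes the value $-2$ on $E$. The paper does not give its own proof here but simply refers to \cite[Lemma 3.1]{yat15:01} (the positive-definite case), whose method is precisely the one you have written out; nothing in that argument uses definiteness of $g$, so your extension to the pseudo-Riemannian setting goes through without change.
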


\begin{lemma}
Let $(\mathfrak m,[g])$ be a conformal pseudo-subriemannian FGLA, and 
let $\gla g$ be the prolongation of $(\mathfrak m,[g])$. 
Then  $\mathfrak g$ is finite dimensional. 
\label{lem32}
\end{lemma}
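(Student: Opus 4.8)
The plan is to prove that $\mathfrak g_p=\{0\}$ for all sufficiently large $p$. First observe that $\mathfrak g_0$ is finite dimensional: by the transitivity of $\mathfrak g$ in degree $0$ the representation $\rho_{-1}$ of $\mathfrak g_0$ on $\mathfrak g_{-1}$ is faithful (a derivation of $\mathfrak m$ that vanishes on $\mathfrak g_{-1}$ vanishes on all of $\mathfrak m$, since $\mathfrak g_{-1}$ generates $\mathfrak m$), and by condition (ii) in the definition of $\mathfrak g_0$ its image lies in the finite dimensional Lie algebra $\mathfrak{co}(\mathfrak g_{-1},g)$. Hence, again by transitivity, for every $p\geqq1$ the map $\mathfrak g_p\to\Hom(\mathfrak g_{-1},\mathfrak g_{p-1})$, $X\mapsto\ad(X)|\mathfrak g_{-1}$, is injective, so each $\mathfrak g_p$ is finite dimensional; moreover it suffices to exhibit a single $k\geqq1$ with $\mathfrak g_k=\{0\}$, since then $\mathfrak g_{k+1}\hookrightarrow\Hom(\mathfrak g_{-1},\mathfrak g_k)=\{0\}$ and, inductively, $\mathfrak g_p=\{0\}$ for all $p\geqq k$.

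I would produce such a $k$ (namely $k=2$) by the same mechanism that makes the prolongation of a conformal structure finite dimensional. Fix $u\in\mathfrak g_1$ and put $B(X)=[u,X]\in\mathfrak g_0$ for $X\in\mathfrak g_{-1}$. Since $u$ is a derivation of $\mathfrak m$ and $\mathfrak g_{-1}$ generates $\mathfrak m$, the map $B$ already determines $u$ on all of $\mathfrak m$ — on $\mathfrak g_{-2}$ via $[u,[X,Y]]=[B(X),Y]+[X,B(Y)]$, and so on down. By Lemma \ref{lem31} we have $\mathfrak g_0=\mathbb RE\oplus\hat{\mathfrak g}_0$, so the conformal factor of $\rho_{-1}(B(X))\in\mathfrak{co}(\mathfrak g_{-1},g)$ depends linearly on $X$, giving a form $\lambda_u\in\mathfrak g_{-1}^*$ with
\[
g([B(X),Y],Z)+g(Y,[B(X),Z])=\lambda_u(X)\,g(Y,Z)\qquad(X,Y,Z\in\mathfrak g_{-1}).
\]
A Koszul-type symmetrization of $S(X,Y,Z):=g([B(X),Y],Z)$ over its three arguments then expresses $S$ — hence $B$, hence $u$ — through $\lambda_u$ together with the term $g([u,[X,Y]],Z)$, which factors through $[\,\cdot\,,\cdot\,]\colon\Lambda^2\mathfrak g_{-1}\to\mathfrak g_{-2}$; substituting this back into $[u,[X,Y]]$ and using the nondegeneracy of $g$ closes the system and shows that $u\mapsto\lambda_u$ embeds $\mathfrak g_1$ into $\mathfrak g_{-1}^*$. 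Running the same symmetrization one degree higher, the corresponding identities for an element of $\mathfrak g_2$ become overdetermined — this is precisely the classical fact that the second prolongation of the linear Lie algebra $\mathfrak{co}(\mathfrak g_{-1},g)$ vanishes when $\dim\mathfrak g_{-1}\geqq3$ — and force $\mathfrak g_2=\{0\}$. (The case $\dim\mathfrak g_{-1}=1$ cannot occur since then $\mathfrak g_{-2}=\{0\}$.) The remaining case $\dim\mathfrak g_{-1}=2$, in which $\mathfrak m$ is the three-dimensional Heisenberg algebra and $\dim\mathfrak g_0\leqq2$, is disposed of directly: by the monotonicity of the prolongation in $\mathfrak g_0$ it embeds into the prolongation of $(\mathfrak m,\mathfrak{gl}(\mathfrak g_{-1}))$, which is finite dimensional (and in fact isomorphic to $\mathfrak{sp}(4,\mathbb R)$).

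The crux, and the step I expect to be the main obstacle, is exactly this degree-$1$ and degree-$2$ analysis: unlike in the classical conformal situation $\mathfrak m=\mathfrak g_{-1}$, one must carry the correction terms coming from $\mathfrak g_{-2}$ (and a priori from $\mathfrak g_{-3},\dots$) through the symmetrization and verify that they do not obstruct the vanishing of $\mathfrak g_2$; it is essential to reach $\mathfrak g_2=\{0\}$, not merely ``$\mathfrak g_1$ finite dimensional'', because only the former makes the induction of the first paragraph terminate. A more conceptual but heavier alternative would be to argue by contradiction from the classification of infinite-dimensional transitive graded Lie algebras: an infinite-dimensional transitive such algebra with $\dim\mathfrak g_{-1}<\infty$ and $\mathfrak g_{-2}\neq\{0\}$ must be of contact type, which would force $\rho_{-1}(\mathfrak g_0)$ to contain a copy of $\mathfrak{csp}(\mathfrak g_{-1})$, contradicting $\rho_{-1}(\mathfrak g_0)\subseteq\mathfrak{co}(\mathfrak g_{-1},g)$.
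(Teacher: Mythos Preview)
Your central claim --- that $\mathfrak g_2=\{0\}$ --- is false, and this breaks the argument. In every example in \S4 the prolongation is a simple graded Lie algebra with $\mathfrak g_2\neq\{0\}$: for instance, the conformal subriemannian FGLA of type $({\rm H}\mathbb O)$ has prolongation of type $({\rm FII},\{\alpha_4\})$ with $\mathfrak g_2\cong\ima\mathbb O$ seven--dimensional, and the type $({\rm G})$ example even has $\mathfrak g_5\neq\{0\}$. The ``correction terms coming from $\mathfrak g_{-2}$'' that you flag as the main obstacle do not merely complicate the symmetrization; they genuinely create nonzero elements of $\mathfrak g_2$. The classical vanishing $\mathfrak{co}(\mathfrak g_{-1},g)^{(2)}=\{0\}$ concerns the \emph{flat} (Spencer) prolongation of the linear Lie algebra $\mathfrak{co}(\mathfrak g_{-1},g)\subset\mathfrak{gl}(\mathfrak g_{-1})$, and it does not say that the degree--$2$ piece of a Tanaka prolongation over a nonabelian $\mathfrak m$ vanishes.

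The paper's proof exploits exactly this distinction. It passes to the subalgebra
\[
\mathfrak h_0=\{\,X\in\mathfrak g_0:[X,\mathfrak g_{\leqq-2}]=\{0\}\,\},
\]
which acts only on $\mathfrak g_{-1}$ and therefore sits inside $\mathfrak{co}(\mathfrak g_{-1},g)$ as a genuine linear Lie algebra; it is the vanishing of the flat prolongation $\mathfrak h_0^{(2)}$ (for $\dim\mathfrak g_{-1}\geqq3$) that one feeds into Tanaka's general finiteness criterion (Corollary~1 to Theorem~11.1 of \cite{tan70:1}). That criterion converts ``$\mathfrak h_0^{(k)}=\{0\}$ for some $k$'' into ``$\mathfrak g_p=\{0\}$ for $p\gg0$'' without ever asserting $\mathfrak g_2=\{0\}$. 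For $\dim\mathfrak g_{-1}=2$ the paper checks directly that in fact $\mathfrak h_0\subset\mathfrak{so}(\mathfrak g_{-1},g)$, whence already $\mathfrak h_0^{(1)}=\{0\}$. Your fallback via the classification of infinite Tanaka prolongations is closer in spirit to what is actually needed, but would require care for FGLAs of kind $\mu\geqq3$; the route through $\mathfrak h_0$ and Tanaka's criterion is both shorter and uniform in $\mu$.
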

\begin{proof}
We first assume that $\dim\mathfrak g_{-1}\geqq3$. 
We define a subalgebra $\mathfrak h_0$ of $\mathfrak g_0$ as follows:
$$\mathfrak h_0=\{\,X\in\mathfrak g_0:[X,\mathfrak g_{\leqq-2}]=\{0\}\}.$$
Identifying $\mathfrak h_0$ with a subalgebra of 
$\mathfrak{gl}(\mathfrak g_{-1})$, we see that 
$\mathfrak h_0\subset \mathfrak{co}(\mathfrak g_{-1},g)$. 
Since the second algebraic prolongation 
$\mathfrak{co}(\mathfrak g_{-1},g)^{(2)}$ of $\mathfrak{co}(\mathfrak g_{-1},g)$ vanishes, 
we get $\mathfrak h_0^{(2)}=\{0\}$. 
From Corollary 1 of Theorem 11.1 in \cite{tan70:1}, it follows that 
$\mathfrak g$ is finite dimensional. 
Next we assume that $\dim\mathfrak g_{-1}=2$. 
There exists a basis $(e_1,e_2)$ of $\mathfrak g_{-1}$ such that 
$g(e_i,e_j)=\varepsilon_i\delta_{ij}$ for all $i,j=1,2$, where 
$\varepsilon_i\in\{-1,1\}$. 
Note that $[e_1,e_2]\ne0$. 
For $A\in\mathfrak h_0$, 
we set 
$\ad A(e_i)=\sum\limits_{k=1}^2a_{ki}e_k$ $(i=1,2;a_{ki}\in\mathbb R)$. 
Since $g([A,e_i],e_j)+g(e_1,[A,e_2])=\lambda_Ag(e_i,e_j)$, 
we see that $2a_{ii}=\lambda_A$ and $a_{ji}\varepsilon_j+a_{ij}\varepsilon_i=0$. Also since $[A,[e_i,e_j]]=0$, we get 
$a_{11}+a_{22}=0$, so $\lambda_A=0$. 
Hence $\mathfrak h_0$ is considered as a subalgebra of 
$\mathfrak{so}(\mathfrak g_{-1},g)$. 
However since the first algebraic prolongation 
$\mathfrak{so}(\mathfrak g_{-1},g)^{(1)}$ of 
$\mathfrak{so}(\mathfrak g_{-1},g)$ 
vanishes, we see that $\mathfrak g$ is finite dimensional. 
\end{proof}

\begin{lemma}
Let $(\mathfrak m,[g])$ be a conformal pseudo-subriemannian FGLA, and let 
$\gla g$ be the prolongation of $(\mathfrak m,[g])$.   
If $\gla l$ is a transitive semisimple GLA such that 
$\mathfrak g_p=\mathfrak l_p$ for all $p<0$ and 
$\ad(\mathfrak l_0)|\mathfrak g_{-1}\subset\mathfrak{co}(\mathfrak g_{-1},g)$, 
then $\mathfrak g$ coincides with $\mathfrak l$. 
\label{lem33}
\end{lemma}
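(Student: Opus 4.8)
The plan is to show first that $\mathfrak l$ may be regarded as a graded subalgebra of $\mathfrak g$ with the same negative part, and then to force the inclusion to be an equality using the transitivity of $\gla g$ and the semisimplicity of $\mathfrak l$.

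First I would identify $\mathfrak l_0$ with a subalgebra of $\mathfrak g_0$. The homomorphism $\ad_{\mathfrak l}\colon\mathfrak l_0\to\Der(\mathfrak m)$ is injective, because an $X\in\mathfrak l_0$ with $[X,\mathfrak m]=\{0\}$ kills $\mathfrak l_{-1}$ and so vanishes by the transitivity of $\mathfrak l$; its image consists of degree-preserving derivations of $\mathfrak m$, and $\ad_{\mathfrak l}(X)|\mathfrak g_{-1}\in\mathfrak{co}(\mathfrak g_{-1},g)$ by hypothesis, so $\ad_{\mathfrak l}(\mathfrak l_0)\subseteq\mathfrak g_0$ by the very definition of $\mathfrak g_0$. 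Identifying $\mathfrak l_0$ with $\ad_{\mathfrak l}(\mathfrak l_0)$, the transitive GLA $\mathfrak l$ is then contained in the prolongation of $(\mathfrak m,\mathfrak l_0)$, which in turn embeds as a graded subalgebra into the prolongation $\gla g$ of $(\mathfrak m,\mathfrak g_0)$ since $\mathfrak l_0\subseteq\mathfrak g_0$ --- this last point being immediate from Tanaka's inductive construction of the prolongation (\cite[\S5]{tan70:1}), which is monotone with respect to the $0$-th datum. Hence I may regard $\mathfrak l$ as a graded subalgebra of $\mathfrak g$ with $\mathfrak l_p=\mathfrak g_p$ for all $p<0$.

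Next I would check that the characteristic element $E$ of $\gla g$ lies in $\mathfrak l$. Being a gradation of a semisimple Lie algebra, $\mathfrak l$ has its own characteristic element $E'\in\mathfrak l_0$, and for every $Y\in\mathfrak l_{-1}=\mathfrak g_{-1}$ one has $[E-E',Y]=-Y-(-Y)=0$; since $E-E'\in\mathfrak g_0$ and $\gla g$ is transitive, $E=E'\in\mathfrak l$. By Lemma \ref{lem32} the algebra $\mathfrak g$ is finite dimensional, so I may view it as a finite dimensional $\mathfrak l$-module under $\ad$ and, by Weyl's theorem, choose an $\ad_{\mathfrak g}(\mathfrak l)$-invariant complement $\mathfrak c$ of the submodule $\mathfrak l$. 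Since $\mathfrak c$ is stable under $\ad E$, it is graded, $\mathfrak c=\bigoplus_{p}\mathfrak c_p$ with $\mathfrak g_p=\mathfrak l_p\oplus\mathfrak c_p$ for every $p$; in particular $\mathfrak c_p=\{0\}$ for $p<0$.

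Finally I would show $\mathfrak c=\{0\}$. If not, let $p_0$ be the least integer with $\mathfrak c_{p_0}\ne\{0\}$; then $p_0\geqq0$. As $\mathfrak c$ is $\ad_{\mathfrak g}(\mathfrak l)$-invariant and $\mathfrak g_{-1}=\mathfrak l_{-1}$, one has $[\mathfrak c_{p_0},\mathfrak g_{-1}]\subseteq\mathfrak c_{p_0-1}=\{0\}$ by the minimality of $p_0$, and the transitivity of $\gla g$ then yields $\mathfrak c_{p_0}=\{0\}$, a contradiction. Therefore $\mathfrak c=\{0\}$ and $\mathfrak g=\mathfrak l$. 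I expect the only delicate step to be the embedding $\mathfrak l\hookrightarrow\mathfrak g$ in the first paragraph: when $\mathfrak l_0$ is a proper subalgebra of $\mathfrak g_0$, one really needs that the prolongation of $(\mathfrak m,\mathfrak l_0)$ sits inside that of $(\mathfrak m,\mathfrak g_0)$; granting this, the rest is a short transitivity argument, and the semisimplicity of $\mathfrak l$ is used only through the existence of a characteristic element and Weyl's complete reducibility theorem.
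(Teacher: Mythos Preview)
Your argument is correct, but it differs from the paper's. Both proofs begin by embedding $\mathfrak l$ into $\mathfrak g$ as a graded subalgebra (the paper states this in one line; you spell out the monotonicity of Tanaka's prolongation, which is indeed the content of that line). After that the paths diverge. The paper shows directly that $\mathfrak g$ is semisimple: the radical $\mathfrak r$ of $\mathfrak g$ is a graded ideal, and $\mathfrak r\cap\mathfrak l$ is a solvable ideal of the semisimple $\mathfrak l$, so $\mathfrak r_-\subset\mathfrak r\cap\mathfrak l=\{0\}$; transitivity then forces $\mathfrak r=\{0\}$. Once both $\mathfrak g$ and $\mathfrak l$ are semisimple, the Killing-form duality gives $\dim\mathfrak g_p=\dim\mathfrak g_{-p}=\dim\mathfrak l_{-p}=\dim\mathfrak l_p$, hence $\mathfrak g_p=\mathfrak l_p$ for $p\neq0$, and finally $\mathfrak g_0=[\mathfrak g_{-1},\mathfrak g_1]=[\mathfrak l_{-1},\mathfrak l_1]=\mathfrak l_0$. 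Your route instead bypasses the semisimplicity of $\mathfrak g$: you take a Weyl complement $\mathfrak c$ of $\mathfrak l$ in $\mathfrak g$, grade it via the characteristic element, and annihilate it degree by degree using transitivity. Your approach is self-contained in that it does not invoke the structural facts $\dim\mathfrak g_p=\dim\mathfrak g_{-p}$ and $\mathfrak g_0=[\mathfrak g_{-1},\mathfrak g_1]$ for semisimple GLAs; the paper's approach, on the other hand, makes explicit the useful intermediate fact that the prolongation itself is semisimple and avoids appealing to complete reducibility.
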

\begin{proof}
Since $\ad(\mathfrak l_0)|\mathfrak g_{-1}\subset \mathfrak{co}(\mathfrak g_{-1},g)$, 
$\gla l$ is a graded subalgebra of $\gla g$. 
Let $\mathfrak r$ be the radical of $\mathfrak g$; then 
$\mathfrak r$ is a graded ideal of $\gla g$: 
$\gla r$, $\mathfrak r_p=\mathfrak r\capprod \mathfrak g_p$. 
Since $\mathfrak m=\mathfrak l_-$, we see that $\mathfrak r_-=\{0\}$. 
By transitivity of $\gla g$, we get $\mathfrak r=\{0\}$, so 
$\mathfrak g$ is semisimple. 
Since $\dim \mathfrak g_p=\dim \mathfrak g_{-p}=\dim \mathfrak l_{-p}
=\dim \mathfrak l_p$ for $p>0$, we get 
$\mathfrak g_p=\mathfrak l_p$ for $p\ne0$. 
Since $\mathfrak l_0=[\mathfrak l_{-1},\mathfrak l_{1}]
=[\mathfrak g_{-1},\mathfrak g_{1}]=\mathfrak g_0$, 
we obtain $\mathfrak g=\mathfrak l$. 
\end{proof}
The following lemma is essentially due to the proof of \cite[Lemma 4.1]{BA02:1}. 
\begin{lemma}
Let $(\mathfrak m,[g])$ be a conformal pseudo-subriemannian FGLA of type 
$(r,s)$, and let 
$\gla g$ be the prolongation of $(\mathfrak m,[g])$.   
If $\mathfrak a$ is a maximal $\mathbb R$-diagonalizable commutative subalgebra of $\mathfrak g$ contained in $\mathfrak g_0$,  
then $\dim \mathfrak a\leqq \min\{r,s\}+1$. 
In particular, if $\mathfrak g$ is semisimple, then we have 
$\rank_\mathbb R\mathfrak g\leqq\min\{r,s\}+1$. 
\end{lemma}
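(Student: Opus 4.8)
The plan is to push everything down to $\mathfrak g_{-1}$ and then run an eigenvalue/signature count. First I would note that the restriction homomorphism $\rho_{-1}\colon\mathfrak g_0\to\mathfrak{gl}(\mathfrak g_{-1})$, $A\mapsto\operatorname{ad}(A)|\mathfrak g_{-1}$, is injective, since an $A\in\mathfrak g_0$ with $[A,\mathfrak g_{-1}]=\{0\}$ vanishes by transitivity of the prolongation. Writing $\mathfrak b=\rho_{-1}(\mathfrak a)$ we thus have $\dim\mathfrak b=\dim\mathfrak a$, and by the definition of $\mathfrak g_0$ in the prolongation $\mathfrak b\subset\mathfrak{co}(\mathfrak g_{-1},g)$. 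Since $\mathfrak a$ is commutative and every $\operatorname{ad}(A)$ $(A\in\mathfrak a)$ is diagonalizable over $\mathbb R$, the operators in $\mathfrak b$ form a commuting family of endomorphisms of $\mathfrak g_{-1}$ each diagonalizable over $\mathbb R$ (the restriction of an $\mathbb R$-diagonalizable operator to an invariant subspace is again $\mathbb R$-diagonalizable), so $\mathfrak g_{-1}$ decomposes as a joint eigenspace sum $\mathfrak g_{-1}=\bigoplus_{\lambda\in\Lambda}V_\lambda$ with $\Lambda\subset\mathfrak b^{*}$ finite and $V_\lambda=\{v\in\mathfrak g_{-1}:Av=\lambda(A)v\text{ for all }A\in\mathfrak b\}\ne\{0\}$. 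As an element of $\mathfrak b$ acting by $0$ on every $V_\lambda$ must be $0$, the set $\Lambda$ spans $\mathfrak b^{*}$, so $\dim\mathfrak a=\dim\mathfrak b=\dim\operatorname{span}\Lambda$.

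Next I would exploit the conformal condition. Let $\eta\in\mathfrak b^{*}$ be the linear functional with $A\cdot g=\eta_{A}g$ for $A\in\mathfrak b$. For $v\in V_\lambda$, $w\in V_{\lambda'}$ the identity $g(Av,w)+g(v,Aw)=\eta_{A}g(v,w)$ gives $(\lambda(A)+\lambda'(A))g(v,w)=\eta_{A}g(v,w)$ for all $A\in\mathfrak b$, hence $g(V_\lambda,V_{\lambda'})=\{0\}$ unless $\lambda+\lambda'=\eta$. Put $c=\tfrac12\eta$ and $\mu_\lambda=\lambda-c$; the $\mu_\lambda$ are pairwise distinct and $g(V_\lambda,V_{\lambda'})=\{0\}$ unless $\mu_{\lambda'}=-\mu_\lambda$. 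Grouping the weight spaces by the value of $\mu_\lambda$ and writing $W_\nu$ for the one with $\mu_\lambda=\nu$, this exhibits $\mathfrak g_{-1}$ as the $g$-orthogonal direct sum of $W_0$ (absent if no $\mu_\lambda$ vanishes) and of the pieces $W_\nu\oplus W_{-\nu}$ over the unordered pairs $\{\nu,-\nu\}$ with $\nu\ne0$.

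The final step is the bookkeeping. Nondegeneracy of $g$ forces it to be nondegenerate on each summand of this orthogonal decomposition; in a piece $W_\nu\oplus W_{-\nu}$ with $\nu\ne0$ the two summands are totally $g$-isotropic and hence paired perfectly by $g$, so $\dim W_\nu=\dim W_{-\nu}=:d_\nu\geq1$ and $g$ has signature $(d_\nu,d_\nu)$ on this piece. Letting $t$ be the number of unordered pairs $\{\nu,-\nu\}$ with $\nu\ne0$, we get $\min\{r,s\}\geq\sum d_\nu\geq t$. On the other hand the nonzero $\mu_\lambda$ are exactly the $\pm\nu$ over these $t$ pairs, so they span a subspace of $\mathfrak b^{*}$ of dimension at most $t$; since $\Lambda\subset\{\mu_\lambda+c:\lambda\in\Lambda\}$, we obtain $\dim\operatorname{span}\Lambda\leq t+1$. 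Hence $\dim\mathfrak a=\dim\operatorname{span}\Lambda\leq t+1\leq\min\{r,s\}+1$.

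For the last assertion, when $\mathfrak g$ is semisimple one extends $\mathbb RE$ — which is $\mathbb R$-diagonalizable because $\operatorname{ad}E$ has integer eigenvalues — to a maximal $\mathbb R$-diagonalizable commutative subalgebra $\mathfrak a$ of $\mathfrak g$; as recalled in \S2 such an $\mathfrak a$ is contained in $\mathfrak g_0$, and $\dim\mathfrak a=\operatorname{rank}_{\mathbb R}\mathfrak g$, so the first part gives $\operatorname{rank}_{\mathbb R}\mathfrak g\leq\min\{r,s\}+1$. I expect the only mildly delicate point to be this last bookkeeping — checking that each isotropically paired block contributes equally to the two entries of the signature, and that the weights in $\Lambda$ genuinely span $\mathfrak b^{*}$ — everything else being a direct unwinding of the definitions.
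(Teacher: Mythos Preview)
Your argument is correct and follows essentially the same route as the paper: simultaneously diagonalize $\mathfrak a$ on $\mathfrak g_{-1}$, use the (conformal) orthogonality relation among weight spaces, and bound the number of independent weights by the dimension of a totally isotropic subspace, i.e.\ by $\min\{r,s\}$. The only cosmetic difference is that the paper invokes Lemma~\ref{lem31} to split off $\mathbb RE$ up front and work inside $\mathfrak{so}(\mathfrak g_{-1},g)$ (so the pairing condition is $\lambda+\mu=0$), whereas you keep the full conformal action and absorb the extra direction via the shift $c=\tfrac12\eta$; your signature bookkeeping with the blocks $W_\nu\oplus W_{-\nu}$ is in fact a bit cleaner than the paper's choice of a basis of weights, which tacitly needs $\lambda_i+\lambda_j\neq 0$.
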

\begin{proof} 
Clearly $\mathfrak a$ contains the characteristic element $E$ of 
$\gla g$. By lemma \ref{lem31}, 
$\mathfrak a$ can be decomposed into the direct sum 
$\mathfrak a'\oplus \mathbb RE$, 
where $\mathfrak a'$ is a subalgebra of $\mathfrak a$ such that 
$\ad(\mathfrak a')|\mathfrak g_{-1}\subset \mathfrak{so}(\mathfrak g_{-1},g)$. 
Then $\mathfrak a'$ is $\mathbb R$-diagonalizable in $\mathfrak g_{-1}$. 
Let $\lambda,\mu$ be weights of the $\mathfrak a'$-module $\mathfrak g_{-1}$ 
and let $V^\lambda,V^\mu$ be the corresponding weight spaces. 
For $x\in V^\lambda$, $y\in V^\mu$ and $t\in \mathfrak a'$, we get 
$$0=g([t,x],y)+g(x,[t,y])=(\lambda+\mu)(t)g(x,y).$$
Hence if $\lambda+\mu\ne0$, then $g(V^\lambda,V^\mu)=0$. 
Let $\hat{\mathfrak a}$ be the subspace of $\mathfrak a'^*$ 
spanned by the weights of the $\mathfrak a'$-module $\mathfrak g_{-1}$. 
Since the $\mathfrak a'$-module $\mathfrak g_{-1}$ is faithful, 
the annihilator space $\{\,h\in\mathfrak a':\lambda(h)=0\quad 
\text{for all}\ \lambda\in \hat{\mathfrak a}\,\}$ vanishes, so 
$\dim \hat{\mathfrak a}=\dim \mathfrak a'$. Thus 
the weights of the module span $\mathfrak a'^*$. 
There exists a basis $(\lambda_1,\dots,\lambda_l)$ of $\mathfrak a'^*$ 
such that each $\lambda_i$ is a weight of the $\mathfrak a'$-module $\mathfrak g_{-1}$. Then 
$U=\bigoplus\limits_{i=1}^lV^{\lambda_i}$ is a totally isotropic subspace of 
$(\mathfrak g_{-1},g)$, so 
$\dim \mathfrak a-1= \dim U\leqq \min\{r,s\}$. 
If $\mathfrak g$ is semisimple, then $\rank_\mathbb R\mathfrak g$  
equals to $\dim\mathfrak a$, so we obtain 
$\rank_\mathbb R\mathfrak g\leqq\min\{r,s\}+1$. 
\end{proof}

\section{Examples of conformal pseudo-subriemannian FGLAs of semisimple type}
\subsection{Conformal pseudo-subriemannian FGLAs of classical type}
\label{sec41}
\begin{example}[{cf. \cite[\S9]{tak65:01} and \cite[Example 3.1.2, p.241]{cs09:0}}] 
Let $\mathbb K$ be $\mathbb C$, $\mathbb H$, 
$\mathbb C'$ or $\mathbb H'$. 
Here we consider $\mathbb K$ as an $\mathbb R$-algebra. 
We put 
$\mathfrak l=
\mathfrak{sl}(n,\mathbb K)$ $(n\geqq3)$; 
then $\mathfrak l$ is a real simple Lie algebra. 
Let $K_m$ be the $m\times m$ matrix whose $(i,j)$-component is 
$\delta_{i,m+1-j}$. 
We define an $n\times n$ symmetric real matrix $S_{p,q}$ as follows: 
$$S_{p,q}=
\begin{bmatrix}
0 & 0 & K_p \\
0 & 1_q & 0 \\
K_p & 0 & 0 \\
\end{bmatrix}\qquad (p\geqq1,q\geqq0,2p+q=n\geqq3),$$
where $1_{q}$ denotes the $q\times q$ identity matrix. 
Here the center column and the center row of $S_{p,q}$ should be deleted 
when $q=0$. 
Then $S_{p,q}$ is a symmetric real matrix with signature $(p+q,p)$ 
such that $S_{p,q}^2=S_{p,q}$. 
We put 
$
\mathfrak{g}=\{\,X\in\mathfrak l:
X^*S_{p,q}+S_{p,q}X=O\,\}$; then 
$$\mathfrak{g}=\left\{
X=\begin{bmatrix}
X_{11} & X_{12} & X_{13} \\
X_{21} & X_{22} & -S_{p-1,q}X^*_{12} \\
X_{31} & -X^*_{21}S_{p-1,q} & -\overline{X_{11}} \\
\end{bmatrix}\in\mathfrak l
:
\begin{aligned} 
& X_{11}\in \mathbb K,\ X_{12}\in M(1,n',\mathbb K), \\ 
& X_{21}\in M(n',1,\mathbb K), \\ 
&X_{31},X_{13}\in \ima \mathbb K, 
X_{22}\in \mathfrak{gl}(n',\mathbb K), \\
&X_{22}+S_{p-1,q}X^*_{22}S_{p-1,q}=O
\end{aligned}\right\},
$$
where $n'=n-2$ and we set $S_{0,m}=1_m$. 
Here $M(p,q,\mathbb K)$ denotes the set of $\mathbb K$-valued $p\times q$-matrices. 
We define subspaces $\mathfrak g_p$ of $\mathfrak g$ as follows: 
$$\begin{aligned}
&\mathfrak g_{-2}
 =\left\{\begin{bmatrix}
0 & 0 & 0 \\
0 & 0 & 0 \\
X_{31} & 0 & 0 \\
\end{bmatrix}\in\mathfrak g:
X_{31}\in\ima \mathbb K
\right\}, \\
& 
\mathfrak g_{-1}=\left\{\begin{bmatrix}
0 & 0 & 0 \\
X_{21} & 0 & 0 \\
0 & -X^*_{21}S_{p-1,q} & 0 \\
\end{bmatrix}\in\mathfrak g: 
X_{21}\in M(n',1,\mathbb K)\right\}, \\
& 
\mathfrak g_{0}=\left\{\begin{bmatrix}
X_{11} & 0 & 0 \\
0 & X_{22} & 0 \\
0 & 0 & -\overline{X_{11}} \\
\end{bmatrix}\in\mathfrak g: 
\begin{aligned} 
& X_{11}\in \mathbb K, 
X_{22}\in \mathfrak{gl}(n',\mathbb K), \\
& X_{22}+S_{p-1,q}X^*_{22}S_{p-1,q}=O
\end{aligned}
\right\}, \\
& 
\mathfrak g_p=\{~X\in\mathfrak g:
{}^tX\in\mathfrak g_{-p}~\}\quad (p=1,2), 
\quad \mathfrak g_p=\{0\} \quad (|p|>2). 
\\
\end{aligned}$$
Then $\gla g$ becomes a GLA whose negative part $\mathfrak m$ is an FGLA of the second kind. 
We define a symmetric bilinear form $g$ on $\mathfrak g_{-1}$ as follows:  
$$g(X, Y)=\rea(X^*_{21}S_{p-1,q}Y_{21}),\qquad X,Y\in \mathfrak g_{-1}.$$
Then $g$ is nondegenerate and 
for $A\in\mathfrak g_0$ we obtain 
$(\ad(A)|\mathfrak g_{-1})\cdot g=-2(\rea A_{11})g$. 
Hence 
$\ad(\mathfrak g_0)|\mathfrak g_{-1}\subset\mathfrak{co}(\mathfrak g_{-1},g)$. 
The conformal pseudo-subriemannian FGLA $(\mathfrak m,[g])$ is said to be 
of type $({\rm H}\mathbb K)_{p,q}$.  

In case $\mathbb K=\mathbb C$ or $\mathbb C'$  
we know that $\mathfrak g$ is denoted by 
$\mathfrak{su}(p+q,p,\mathbb K)$ $(n=2p+q)$. 
Note that $\mathfrak{su}(p+q,p,\mathbb C')$ is isomorphic to 
$\mathfrak{sl}(2p+q,\mathbb R)$ for any $p,q$. 
If $\mathbb K=\mathbb C$ (resp. $\mathbb K=\mathbb C'$), then 
$\gla g$ is a real SGLA of type 
$(\textrm{(AIIIa)}_{l,p},\{\alpha_1,\alpha_l\})$ $(l=n-1=2p+q-1, 
p\geqq2,q\geqq1)$, 
$(\textrm{(AIIIb)}_{l},\{\alpha_1,\alpha_l\})$ $(l=n-1=2p-1, p\geqq2,q=0)$ or
$(\textrm{(AIV)}_{l},\{\alpha_1,\alpha_l\})$ $(l=n-1=q+1,p=1,q\geqq1)$ 
(resp. $(\textrm{(AI)}_l,\{\alpha_1,\alpha_l\})$ $(l=n-1=2p+q-1\geqq2)$), 
and $g$ has the signature $(2p+2q-2,2p-2)=(2l-2p,2p-2)$ 
(resp. $(2p+q-2,2p+q-2)=(l-1,l-1)$).  
Here 
$\textrm{(AIIIa)}_{l,p}$, 
$\textrm{(AIIIb)}_{l}$,  
$\textrm{(AIV)}_{l}$ and 
$\textrm{(AI)}_l$ are the following Satake diagrams.  

\begin{center}
\begin{minipage}{5cm}
\begin{xy}
(-20,-7) *{\textrm{(AIIIa)}_{l,p}:}="K", 
(0,0) *{\circ}="A"*++!D{{\scriptstyle 1}},
(10,0) *{\circ}="B"*++!D{{\scriptstyle 2}},
(30,0) *{\circ}="C"*++!D{{\scriptstyle p}},
(40,0) *{\bullet}="D"*++!D{{\scriptstyle p+1}},
(40,-5) *{\bullet}="E",
(40,-10) *{\bullet}="F",
(40,-15) *{\bullet}="G"*++!U{{\scriptstyle l-p}},
(30,-15) *{\circ}="H"*++!U{{\scriptstyle l-p+1}},
(10,-15) *{\circ}="I"*++!U{{\scriptstyle l-1}},
(0,-15) *{\circ}="J"*++!U{{\scriptstyle l}},
\ar @{-} "A";"B"
\ar @{.} "B";"C"
\ar @{-} "C";"D"
\ar @{-} "D";"E"
\ar @{.} "E";"F"
\ar @{-} "F";"G"
\ar @{-} "G";"H"
\ar @{.} "H";"I"
\ar @{-} "I";"J"
\ar @{<->} @/_3mm/ "A";"J"
\ar @{<->} @/_3mm/ "B";"I"
\ar @{<->} @/_3mm/ "C";"H"
\end{xy}
\end{minipage}
\end{center}

\begin{center}
\begin{minipage}{5cm}
\begin{xy}
(-25,-5) *{\textrm{(AIIIb)}_{l}:}="K", 
(0,0) *{\circ}="A"*++!D{{\scriptstyle 1}},
(10,0) *{\circ}="B"*++!D{{\scriptstyle 2}},
(30,0) *{\circ}="C"*++!D{{\scriptstyle p-1}},
(40,-5) *{\circ}="D"*++!D{{\scriptstyle p}},
(30,-10) *{\circ}="E"*++!U{{\scriptstyle p+1}},
(10,-10) *{\circ}="F"*++!U{{\scriptstyle l-1}},
(0,-10) *{\circ}="G"*++!U{{\scriptstyle l}},
\ar @{-} "A";"B"
\ar @{.} "B";"C"
\ar @{-} "C";"D"
\ar @{-} "D";"E"
\ar @{.} "E";"F"
\ar @{-} "F";"G"
\ar @{<->} @/_3mm/ "A";"G"
\ar @{<->} @/_3mm/ "B";"F"
\ar @{<->} @/_3mm/ "C";"E"
\end{xy}
\end{minipage}
\end{center}

\begin{center}
\begin{minipage}{5cm}
\begin{xy}
(-10,0) *{\textrm{(AIV)}_l:}="K", 
(0,0) *{\circ}="A"*++!D{{\scriptstyle 1}},(10,0) *{\bullet}="B",(30,0) *{\bullet}="C",
(40,0) *{\circ}="D"*++!D{{\scriptstyle l}},
\ar @{-} "A";"B"
\ar @{.} "B";"C"
\ar @{-} "C";"D"
\ar @{<->} @/^6mm/ "A";"D"
\end{xy}
\end{minipage}
\end{center}

\begin{center}
\begin{minipage}{5cm}
\begin{xy}
(-10,0) *{\textrm{(AI)}_{l}:}="K", 
(0,0) *{\circ}="A"*++!D{{\scriptstyle 1}},
(10,0) *{\circ}="B"*++!D{{\scriptstyle 2}},
(30,0) *{\circ}="C"*++!D{{\scriptstyle l-1}},
(40,0) *{\circ}="D"*++!D{{\scriptstyle l}},
\ar @{-} "A";"B"
\ar @{.} "B";"C"
\ar @{-} "C";"D"
\end{xy}
\end{minipage}
\end{center}

In case $\mathbb K=\mathbb H$ or $\mathbb H'$  
we know that $\mathfrak g$ is denoted by $\mathfrak{sp}(p+q,p,\mathbb K)$. 
Note that $\mathfrak{sp}(p+q,p,\mathbb H')$ is isomorphic to 
$\mathfrak{sp}(2p+q,\mathbb R)$ for any $p,q$ and that 
$\mathfrak{sp}(n,\mathbb R)$ is denoted by $\mathfrak{sp}(2n,\mathbb R)$ or 
$\mathfrak{sp}_{2n}(\mathbb R)$ in 
\cite{cs09:0}, \cite{gov94:0} and \cite{0ni04:0}. 
If $\mathbb K=\mathbb H$ (resp. $\mathbb K=\mathbb H'$), then 
$\gla g$ is a real SGLA of type 
$(\textrm{(CIIa)}_{l,p},\{\alpha_2\})$ $(l=n=2p+q\geqq3,p,q\geqq1)$, or 
$(\textrm{(CIIb)}_{l},\{\alpha_2\})$ ($n=l=2p\geqq3,q=0$) 
(resp. $(\textrm{(CI)}_l,\{\alpha_2\})$ $(l=n=2p+q\geqq3)$) 
and $g$ has the signature $(4p+4q-4,4p-4)=(4l-4p-4,4p-4)$ 
(resp. $(4p+2q-4,4p+2q-4)=(2l-4,2l-4)$).  
Here 
$\textrm{(CIIa)}_{l,p}$, 
$\textrm{(CIIb)}_{l}$ and 
$\textrm{(CI)}_l$ are the following Satake diagrams.  
\begin{center}
\begin{minipage}{5cm}
\begin{xy}
(-13,0) *{\textrm{(CIIa)}_{l,p}:}="K", 
(0,0) *{\bullet}="A"*++!D{{\scriptstyle 1}},
(10,0) *{\circ}="B"*++!D{{\scriptstyle 2}},
(20,0) *{\bullet}="C"*++!D{{\scriptstyle 3}},
(35,0) *{\circ}="D"*++!D{{\scriptstyle 2p}},
(45,0) *{\bullet}="E"*++!D{{\scriptstyle 2p+1}},
(60,0) *{\bullet}="F"*++!D{{\scriptstyle l-1}},
(70,0) *{\bullet}="G"*++!D{{\scriptstyle l}},
\ar @{-} "A";"B"
\ar @{-} "B";"C"
\ar @{.} "C";"D"
\ar @{-} "D";"E"
\ar @{.} "E";"F"
\ar @{=>} "G";"F"
\end{xy}
\end{minipage}
\end{center}

\begin{center}
\begin{minipage}{5cm}
\begin{xy}
(-18,0) *{\textrm{(CIIb)}_{l}:}="K", 
(0,0) *{\bullet}="A"*++!D{{\scriptstyle 1}},
(10,0) *{\circ}="B"*++!D{{\scriptstyle 2}},
(20,0) *{\bullet}="C"*++!D{{\scriptstyle 3}},
(40,0) *{\circ}="D"*++!D{{\scriptstyle 2p-2}},
(50,0) *{\bullet}="E"*++!D{{\scriptstyle 2p-1}},
(60,0) *{\circ}="F"*++!D{{\scriptstyle 2p}},
\ar @{-} "A";"B"
\ar @{-} "B";"C"
\ar @{.} "C";"D"
\ar @{-} "D";"E"
\ar @{=>} "F";"E"
\end{xy}
\end{minipage}
\end{center}

\begin{center}
\begin{minipage}{5cm}
\begin{xy}
(-10,0) *{\textrm{(CI)}_{l}:}="K", 
(0,0) *{\circ}="A"*++!D{{\scriptstyle 1}},
(10,0) *{\circ}="B"*++!D{{\scriptstyle 2}},
(30,0) *{\circ}="C"*++!D{{\scriptstyle l-1}},
(40,0) *{\circ}="D"*++!D{{\scriptstyle l}},
\ar @{-} "A";"B"
\ar @{.} "B";"C"
\ar @{=>} "D";"C"
\end{xy}
\end{minipage}
\end{center}
By Lemma \ref{lem33}, the prolongation of a conformal 
pseudo-subriemannian FGLA of type $({\rm H}\mathbb K)_{p,q}$ 
coincides with $\gla g$. 
\label{exa41}
\end{example}

\begin{example}[cf.{\cite[\S4.4 (3)]{yam93:1}}] 
We put 
$\mathfrak g=
\{\,X\in \gl(2l+1,\mathbb R):
{}^tXS+SX=0\}$ $(l\geqq2)$, 
where $S=S_{l,1}$. 
More explicitly 
$$
\mathfrak{g}
=\left\{
X=\begin{bmatrix}
A & a & B \\
\xi & 0 & -a' \\
C & -\xi' & -A' \\
\end{bmatrix}
\in\mathfrak{gl}(2l+1,\mathbb R)
:
\begin{aligned} 
& A,B,C\in \mathfrak{gl}(l,\mathbb R), \\
& B=-B', C=-C', \\
& a\in M(l,1,\mathbb R), \xi\in M(1,l,\mathbb R) 
\end{aligned} 
\right\}.$$
Here for an $r\times s$-matrix $X$ we put $X'=K_s{}^t\!XK_r$. 
The Lie algebra $\mathfrak g$ is a real simple Lie algebra 
$\mathfrak{so}(l+1,l,\mathbb R)$ of type ${\rm(BI)}_{l,l}$. 
Here ${\rm (BI)}_{l,l}$ is the following Satake diagram: 
\begin{center}
\begin{minipage}{5cm}
\begin{xy}
(-10,0) *{\textrm{(BI)}_{l,l}:}="K", 
(0,0) *{\circ}="A"*++!D{{\scriptstyle 1}},
(10,0) *{\circ}="B"*++!D{{\scriptstyle 2}},
(30,0) *{\circ}="C"*++!D{{\scriptstyle l-1}},
(40,0) *{\circ}="D"*++!D{{\scriptstyle l}},
\ar @{-} "A";"B"
\ar @{.} "B";"C"
\ar @{=>} "C";"D"
\end{xy}
\end{minipage}
\end{center}

We define subspaces $\mathfrak g_p$ of $\mathfrak g$ as follows: 
$$\begin{aligned}
&\mathfrak g_{-3}
 =\left\{\begin{bmatrix}
0 & 0 & 0 & 0 & 0 \\
0 & 0 & 0 & 0 & 0 \\
0 & 0 & 0 & 0 & 0 \\
X_{41} & 0 & 0 & 0 & 0 \\
0 & -X'_{41} & 0 & 0 & 0 \\
\end{bmatrix}\in\mathfrak g:
X_{41}\in M(l-1,1,\mathbb R)
\right\}, \\
& 
\mathfrak g_{-2}
=\left\{\begin{bmatrix}
0 & 0 & 0 & 0 & 0 \\
0 & 0 & 0 & 0 & 0 \\
X_{31} & 0 & 0 & 0 & 0 \\
0 & X_{42} & 0 & 0 & 0 \\
0 & 0 & -X_{31} & 0 & 0 \\
\end{bmatrix}\in\mathfrak g: 
\begin{aligned}
& X_{31}\in\mathbb R,X_{42}\in\mathfrak{gl}(l-1,\mathbb R), \\
& X'_{42}=-X_{42} \\
\end{aligned} 
\right\}, \\
& 
\mathfrak g_{-1}
=\left\{\begin{bmatrix}
0 & 0 & 0 & 0 & 0 \\
X_{21} & 0 & 0 & 0 &0 \\
0 & X_{32} & 0 & 0 & 0  \\
0 & 0 & -X'_{32} & 0 & 0 \\
0 & 0 & 0 & -X'_{21} &0 \\
\end{bmatrix}\in\mathfrak g: 
\begin{aligned} 
& X_{21}\in M(l-1,1,\mathbb R), \\
& X_{32}\in M(1,l-1,\mathbb R) 
\end{aligned}
\right\}, \\
&
\mathfrak g_{0}=\left\{\begin{bmatrix}
X_{11} & 0 & 0 & 0 & 0 \\
0 & X_{22} & 0 & 0 & 0\\
0 & 0 & 0 & 0 & 0 \\
0 & 0 & 0 & -X'_{22} & 0   \\
0 & 0 &  0 & 0 & -X_{11}  \\
\end{bmatrix}\in\mathfrak g: 
X_{11}\in \mathbb R, 
X_{22}\in\mathfrak{gl}(l-1,\mathbb R) \right\}, \\
& 
\mathfrak g_p=\{~X\in\mathfrak g:
{}^tX\in\mathfrak g_{-p}~\}\quad (p=1,2,3), 
\quad \mathfrak g_p=\{0\} \quad (|p|>3). 
\\
\end{aligned}$$
Then $\gla g$ becomes a real SGLA of 
type $({\rm (BI)}_{l,l},\{\alpha_1,\alpha_l\})$ whose negative part $\mathfrak m$ is an FGLA of the third kind. 
We define a symmetric bilinear form $g$ on $\mathfrak g_{-1}$ by 
$$g(X, Y)=-\frac12(X_{32}Y_{21}+Y_{32}X_{21})\qquad(X,Y\in \mathfrak g_{-1}).$$ 
Then $g$ is nondegenerate, and for $A\in\mathfrak g_{0}$ we see that 
$(\ad(A)|\mathfrak g_{-1})\cdot g=-A_{11}g$. 
Hence $\ad(\mathfrak g_0)|\mathfrak g_{-1}\subset \mathfrak{co}(\mathfrak g_{-1},g)$. 
By Lemma \ref{lem33}
$(\mathfrak m,[g])$ is 
a conformal neutral-subriemannian FGLA 
such that $\gla g$ is the prolongation of 
$(\mathfrak m,[g])$. 
The conformal pseudo-subriemannian FGLA $(\mathfrak m,[g])$ is said to be 
of type $({\rm BI})_l$.  
\label{exa42}
\end{example}

\subsection{Conformal pseudo-subriemannian FGLAs of exceptional type}
\begin{example}[{\cite[\S3]{gom96:0}}]
Let $\mathbb K=\mathbb O$ or $\mathbb O'$. 
Here we consider $\mathbb K$ as an $\mathbb R$-algebra. 
We define a nondegenerate symmetric bilinear form $g$ on $\mathbb K$ by 
$g(x,y)=\frac12(\bar{x}y+\bar{y}x)$.  
We set 
$$\mathfrak g_{-1}=\mathfrak g_{1}=\mathbb K,\quad \mathfrak g_{-2}=\mathfrak g_{2}=\ima \mathbb K, \quad 
\mathfrak g_0=\mathfrak g'_0\oplus\mathbb R, 
\quad \mathfrak g_p=\{0\}\quad \text{for}\ |p|>2,$$ 
where $\mathfrak g'_0=\{~A\in\mathfrak{so}(\mathbb K,g):A(1)=0~\}$. 
Note that $\mathfrak g'_0$ is isomorphic to 
$\mathfrak{so}(\ima\mathbb K,g)$. 
We further put 
$\gla g$ and $\mathfrak m=\mathfrak g_{-2}\oplus\mathfrak g_{-1}$.   
We define a bracket operation $[\,\cdot,\cdot\,]$ on $\mathfrak g$ as in 
\cite[\S3.2, p.444 and \S3.4, pp.447--448]{gom96:0}. 
By using \cite[Lemma 3.1]{gom96:0} we can prove that $\gla g$ becomes a GLA whose negative part 
$\mathfrak m$ is an FGLA of the second kind. 
For $A\oplus r\in\mathfrak g_0$ and $X,Y\in\mathfrak g_{-1}$ 
we see that 
$(\ad(A\oplus r)|\mathfrak g_{-1})\cdot g=-2rg$, 
and hence $\ad(\mathfrak g_0)|\mathfrak g_{-1}\subset\mathfrak{co}(\mathfrak g_{-1},g)$. 
The conformal pseudo-subriemannian FGLA $(\mathfrak m,[g])$ is said to be 
of type $({\rm H}\mathbb K)$.  

By \cite[Theorem 3.5]{gom96:0}, 
in case $\mathbb K=\mathbb O$ (resp. $\mathbb K=\mathbb O'$) the GLA 
$\gla g$ is a real SGLA of type 
$({\rm FII},\{\alpha_4\})$ (resp. $({\rm FI},\{\alpha_4\})$). 
Here 
${\rm FII}(={\rm F_{4(-20)}})$ and ${\rm FI}(={\rm F_{4(4)}})$ are the 
following Satake diagrams respectively. 
\begin{center}
\begin{minipage}{5cm}
\begin{xy}
(-10,0) *{\textrm{FII}:}="K",
(0,0) *{\bullet}="A"*++!D{{\scriptstyle 1}},
(10,0) *{\bullet}="B"*++!D{{\scriptstyle 2}},
(20,0) *{\bullet}="C"*++!D{{\scriptstyle 3}},
(30,0) *{\circ}="D"*++!D{{\scriptstyle 4}},
\ar @{-} "A";"B"
\ar @{=>} "B";"C"
\ar @{-} "C";"D"
\end{xy}
\end{minipage}
\hspace{1cm}
\begin{minipage}{5cm}
\begin{xy}
(-10,0) *{\textrm{FI}:}="K",
(0,0) *{\circ}="A"*++!D{{\scriptstyle 1}},
(10,0) *{\circ}="B"*++!D{{\scriptstyle 2}},
(20,0) *{\circ}="C"*++!D{{\scriptstyle 3}},
(30,0) *{\circ}="D"*++!D{{\scriptstyle 4}}
\ar @{-} "A";"B"
\ar @{=>} "B";"C"
\ar @{-} "C";"D"
\end{xy}
\end{minipage}
\end{center}
Clearly $(\mathfrak m,[g])$ is a conformal subriemannian FGLA 
(resp. a conformal neutral-subriemannian FGLA) when $\mathbb K=\mathbb O$ (resp. $\mathbb K=\mathbb O'$). 
By Lemma \ref{lem33}, the prolongation of a conformal 
pseudo-subriemannian FGLA of type $({\rm H}\mathbb K)$ 
coincides with $\gla g$. 
\label{exa43}
\end{example}
\begin{example}
Let $V$ be a real vector space $\mathbb R^2$, 
and we set $\mathfrak s=\mathfrak{sl}(V)$. 
We define real vector spaces $\mathfrak l_p$ $(p\in\mathbb Z)$ as follows: 
$$\mathfrak l_{-2}=\mathfrak l_2=\mathbb R,\quad 
\mathfrak l_{-1}=\mathfrak l_1=S^3(V), \quad 
\mathfrak l_0=\mathfrak s\oplus\mathbb R,\quad 
\mathfrak l_p=\{0\}\quad  (p>|2|).$$
We define a bracket operation $[\cdot,\cdot]$ on $\gla l$ 
as in \cite[\S3.1, p.450]{gom96:0}. 
Then 
$\gla l$ becomes a real SGLA of type 
$({\rm G_{2(2)}},\{\alpha_2\})$  
(\cite[Theorem 4.3]{gom96:0}) and the negative part $\mathfrak m$ is an FGLA 
of the second kind. 
Here ${\rm G_{2(2)}}$ is the following Satake diagram: 
\begin{center}
\begin{minipage}{5cm}
\begin{xy}
(-10,0) *{\textrm{G}_{2(2)}:}="K",
(0,0) *{\circ}="A"*++!D{{\scriptstyle 1}},
(10,0) *{\circ}="B"*++!D{{\scriptstyle 2}},
\ar @3{->} "B";"A"
\end{xy}
\end{minipage}
\end{center}

We set $V_{-1}=\mathbb R e_1$ and $V_{-2}=\mathbb R e_2$, 
where $(e_1,e_2)$ is the canonical basis of $V$. 
We put 
$\mathfrak s_p=\{X\in\mathfrak s:X(V_k)\subset V_{k+p}\ \text{for all}\ k\}$; 
then $\gla s$ is a real SGLA of the first kind. 

We define subspaces $W_k$ (resp. $W_{-k}$) $(k=1,\dots,4)$ of 
$\mathfrak l_{1}$ (resp. $\mathfrak l_{-1})$ as follows: 
$$
W_{\pm1}=S^3(V_{-1}),\quad 
W_{\pm2}=S^2(V_{-1})\otimes V_{-2},\quad 
W_{\pm3}=V_{-1}\otimes S^2(V_{-2}),\quad 
W_{\pm4}=S^3(V_{-2});$$ 
then $\mathfrak l_{\pm1}=\bigoplus\limits_{k=1}^{4}W_{\pm k}$. 
We define subspaces $\mathfrak g_p$ of $\mathfrak l$ as follows: 
$$\begin{aligned}
& \mathfrak g_{\pm5}=\mathfrak l_{\pm2},\quad 
\mathfrak g_{k}=W_k\quad (k=\pm2,\pm3,\pm4), \quad 
\mathfrak g_{\pm1}=\mathfrak s_{\pm1}\oplus W_{\pm1}, \\ 
&\mathfrak g_0=\mathfrak s_0\oplus \mathbb R, \quad 
\mathfrak g_p
=\{0\}\quad  (p>|5|). 
\end{aligned}$$
Then $\mathfrak l=\gla g$ becomes a real SGLA of type 
$({\rm G}_{2(2)},\{\alpha_1,\alpha_2\})$ such that 
the negative part $\mathfrak m$ is an FGLA of the 5-th kind. 
Let $(\cdot\mid\cdot)$ be an inner product on $S^3(V)$ induced by 
the canonical inner product on $V$. 
We define a symmetric bilinear form $g$ on $\mathfrak g_{-1}$ as follows: 
$$g(\mathfrak s_{-1},\mathfrak s_{-1})=g(W_{-1},W_{-1})=0, 
\quad g(X,u)=g(u,X)=(Xu\mid e_1^2e_2)\quad (X\in\mathfrak s_{-1},u\in W_{-1}). $$
Then $g$ is nondegenerate, and for $A=\lambda(E_{11}-E_{22})\oplus r\in \mathfrak g_0$, $X\in \mathfrak s_{-1}$ and $u\in W_{-1}$ 
($\lambda,r\in\mathbb R)$,  
we see that 
$(\ad (A)|\mathfrak g_{-1})\cdot g=(\lambda -r)g$, 
where $E_{ij}$ is an element of $\mathfrak s$ such that $E_{ij}e_k=\delta_{jk}e_i$. 
Thus $\ad(\mathfrak g_0)|\mathfrak g_{-1}\subset \mathfrak{co}(\mathfrak g_{-1},g)$. 
Hence by Lemma \ref{lem33}
$(\mathfrak m,[g])$ is 
a conformal neutral-subriemannian FGLA 
such that $\gla g$ is the prolongation of 
$(\mathfrak m,[g])$. 
The conformal pseudo-subriemannian FGLA $(\mathfrak m,[g])$ is said to be 
of type $({\rm G})$.  

\label{exa44}
\end{example}

\section{Classification of conformal pseudo-subriemannian FGLAs of semisimple type}
In this section we prove that 
a conformal pseudo-subriemannian FGLA of semisimple type is 
isomorphic to one of conformal pseudo-subriemannian FGLAs given in 
the previous section. 
\begin{proposition}
Let $(\mathfrak m,[g])$ be a conformal pseudo-subriemannian FGLA of 
semisimple type, and let $\gla g$ be the prolongation of $(\mathfrak m,[g])$. 
\begin{enumerate}
\item If the $\mathfrak g_0$-module $\mathfrak g_{-1}$ 
is irreducible and 
the $\mathfrak g_0(\mathbb C)$-module $\mathfrak g_{-1}(\mathbb C)$ is 
reducible, there exist $\mathfrak g_0(\mathbb C)$-submodules 
$\mathfrak g_{-1}(\mathbb C)^{(i)}$ $(i=1,2)$ such that 
{\rm (i)}
$\mathfrak g_{-1}(\mathbb C)=\mathfrak g_{-1}(\mathbb C)^{(1)}\oplus
\mathfrak g_{-1}(\mathbb C)^{(2)}$; 
{\rm (ii)} 
$\mathfrak g_{-1}(\mathbb C)^{(i)}$ $(i=1,2)$ are totally isotropic subspaces 
of $(\mathfrak g_{-1}(\mathbb C),g)$; 
{\rm (iii)} 
$\mathfrak g_{-1}(\mathbb C)^{(1)}$ is contragredient to 
$\mathfrak g_{-1}(\mathbb C)^{(2)}$ as a 
$\hat{\mathfrak g}_0(\mathbb C)$-module, where 
$\hat{\mathfrak g}_0=(\rho_{-1})^{-1}(\mathfrak{so}(\mathfrak g_{-1},g))$. 
\item 
If the $\mathfrak g_0$-module $\mathfrak g_{-1}$ is reducible, 
then there exist 
$\mathfrak g_0$-submodules 
$\mathfrak g_{-1}^{(i)}$ $(i=1,2)$ such that: 
{\rm (i)} 
$\mathfrak g_{-1}=\mathfrak g_{-1}^{(1)}\oplus
\mathfrak g_{-1}^{(2)}$; 
{\rm (ii)} 
$\mathfrak g_{-1}^{(i)}$ $(i=1,2)$ are totally isotropic subspaces of 
$(\mathfrak g_{-1},g)$; 
{\rm (iii)} 
$\mathfrak g_{-1}^{(1)}$ is contragredient to 
$\mathfrak g_{-1}^{(2)}$ as a $\hat{\mathfrak g}_0$-module; 
{\rm (iv)} 
the $\mathfrak g_0(\mathbb C)$-modules $\mathfrak g_{-1}^{(i)}(\mathbb C)$ are 
irreducible. 
\item $\gla g$ is an SGLA of class {\rm(B)}. 
\end{enumerate}
\label{prop51}
\end{proposition}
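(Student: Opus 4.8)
The plan is to complexify and reduce everything to the structure theory recalled in \S2.

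First I would form $\mathfrak g(\mathbb C)=\bigoplus_p\mathfrak g_p(\mathbb C)$; by Lemma \ref{lem32} it is a finite dimensional complex semisimple GLA whose negative part $\mathfrak m(\mathbb C)$ is an FGLA with $\mathfrak g_{-2}(\mathbb C)\ne\{0\}$ and whose characteristic element is again $E$. The form $g$ extends $\mathbb C$-bilinearly to a nondegenerate symmetric form on $\mathfrak g_{-1}(\mathbb C)$, and by Lemma \ref{lem31} the subalgebra $\hat{\mathfrak g}_0$ preserves $g$, hence so does $\hat{\mathfrak g}_0(\mathbb C)$, and in particular $\mathfrak g_0^{ss}(\mathbb C)=[\mathfrak g_0,\mathfrak g_0](\mathbb C)\subset\hat{\mathfrak g}_0(\mathbb C)$. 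Since $E$ acts on $\mathfrak g_{-1}(\mathbb C)$ as $-\operatorname{id}$, the $\mathfrak g_0(\mathbb C)$-submodules and the $\hat{\mathfrak g}_0(\mathbb C)$-submodules of $\mathfrak g_{-1}(\mathbb C)$ coincide; by Proposition 2.1 the module $\mathfrak g_{-1}(\mathbb C)$ is completely reducible, with decomposition $\bigoplus_{\gamma\in\Pi_1}\mathfrak g^{\mathbb C}_{-1}(-\gamma)$ into irreducibles.

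For (1) and (2) the basic mechanism is Schur's lemma applied to the invariant form: an irreducible constituent $W$ of $\mathfrak g_{-1}$ (over $\mathbb R$ or over $\mathbb C$) that is not self-contragredient satisfies $g|_{W\times W}=0$, and nondegeneracy of $g$ forces the contragredient $W^*$ to be realised by another constituent. In the situation of (1), $\mathfrak g_{-1}$ is $\mathbb R$-irreducible while $\mathfrak g_{-1}(\mathbb C)$ is reducible, so Proposition \ref{prop24}(2) must apply to the relevant $\gamma\in\Pi_1$, with $\Pi_1=\{\gamma,\nu(\gamma)\}$: there are exactly two complex constituents $\mathfrak g^{\mathbb C}_{-1}(-\gamma)$ and $\mathfrak g^{\mathbb C}_{-1}(-\nu(\gamma))$, they are interchanged by the conjugation $\sigma$, and $-\gamma^\sigma\in\Delta_{-1}(-\nu(\gamma))$. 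Combining this with the symmetry and the $\sigma$-invariance of $g$, I would check that these two constituents are totally isotropic and mutually contragredient as $\hat{\mathfrak g}_0(\mathbb C)$-modules, giving (1) with $\mathfrak g_{-1}(\mathbb C)^{(i)}$ the two constituents. In the situation of (2), Proposition \ref{prop24} exhibits the $\mathfrak g_0$-irreducible constituents of $\mathfrak g_{-1}$ as $\sigma$-orbits of the $\mathfrak g^{\mathbb C}_{-1}(-\gamma)$, and again the nondegenerate invariant form pairs these constituents; I would show there are exactly two of them, that they are totally isotropic and mutually contragredient over $\hat{\mathfrak g}_0$, and that each has irreducible complexification. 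Excluding the competing configurations — a self-contragredient constituent carrying a nondegenerate symmetric invariant form, three or more constituents, or a constituent of complex type in the reducible-over-$\mathbb R$ case — is where the real work lies; the tools are the symmetry of $g$ (the invariant form on the individual pieces being typically alternating), the hypothesis $\mathfrak g_{-2}\ne\{0\}$, the rank estimate $\rank_{\mathbb R}\mathfrak g\le\min\{r,s\}+1$, and Proposition \ref{prop24}.

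For (3) I would first show $\mathfrak g(\mathbb C)$ is simple. Decomposing $\mathfrak g(\mathbb C)$ into its simple graded ideals, the negative part of each ideal is generated in degree $-1$ (as $\mathfrak m(\mathbb C)$ is an FGLA), so an ideal with zero degree $-1$ part has zero negative part and, commuting with $\mathfrak g_{-1}(\mathbb C)$, vanishes by transitivity of $\mathfrak g(\mathbb C)$; hence each simple ideal has nonzero degree $-1$ part. By (1), (2) and the trivial case where $\mathfrak g_{-1}$ and $\mathfrak g_{-1}(\mathbb C)$ are both irreducible, $\mathfrak g_{-1}(\mathbb C)$ is irreducible or the direct sum of exactly two mutually contragredient totally isotropic irreducible $\mathfrak g_0(\mathbb C)$-submodules; so $\mathfrak g(\mathbb C)$ has at most two simple ideals, and in the case of two ideals $\mathfrak a,\mathfrak b$ the form $g$ pairs $\mathfrak a_{-1}$ with $\mathfrak b_{-1}$ nondegenerately. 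Since $g$ is $\mathfrak a_0^{ss}$-invariant and $\mathfrak a_0^{ss}$ annihilates $\mathfrak b_{-1}$, we get $[\mathfrak a_0^{ss},\mathfrak a_{-1}]=\{0\}$; but $\mathfrak a_0$ acts faithfully on $\mathfrak a_{-1}$ (transitivity), so $\mathfrak a_0^{ss}=\{0\}$, whence $\mathfrak a_0$ is abelian and acts faithfully and irreducibly on $\mathfrak a_{-1}$ over $\mathbb C$, forcing $\dim_{\mathbb C}\mathfrak a_{-1}=1$ and $\mathfrak a_{-2}=[\mathfrak a_{-1},\mathfrak a_{-1}]=\{0\}$; likewise $\mathfrak b_{-2}=\{0\}$, so $\mathfrak g_{-2}(\mathbb C)=\{0\}$, a contradiction. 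Thus $\mathfrak g(\mathbb C)$ is a complex SGLA. By Proposition \ref{prop23}, $\mathfrak g$ is of class {\rm (A)} or {\rm (B)}; class {\rm (A)} would make $\mathfrak g(\mathbb C)$ a direct sum of two simple ideals, contradicting the simplicity just proved, so $\mathfrak g$ is of class {\rm (B)}, and as a real form of the complex simple Lie algebra $\mathfrak g(\mathbb C)$ it is itself simple. Hence $\mathfrak g$ is an SGLA of class {\rm (B)}.

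The main obstacle is the pairing analysis inside (1) and (2): establishing that in the reducible cases the irreducible constituents of $\mathfrak g_{-1}$ organise themselves into precisely two mutually contragredient totally isotropic pieces of the predicted real or complex type, ruling out block-diagonal or self-dual configurations. Once this module-theoretic picture is secured, the simplicity of $\mathfrak g(\mathbb C)$ and the elimination of class {\rm (A)} are short, and the explicit type of $\mathfrak g$ can afterwards be read off from Proposition \ref{prop22} together with Proposition \ref{prop24}.
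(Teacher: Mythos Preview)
Your strategy for (1) and for (3) is sound, and your argument for (3) is in fact cleaner than the paper's terse one-line contradiction. The genuine gap is in (2), precisely at the place you flag as ``the main obstacle''.

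The paper does not use Schur's lemma, Proposition \ref{prop24}, or the rank bound of Lemma 3.4 for parts (1)--(2) at all. Its device is elementary and direct: for each irreducible constituent $\mathfrak g_{-1}^{(j)}$ of $\mathfrak g_{-1}$ there is an element $E_i\in\mathfrak g_0$ with $[E_i,X_j]=-\delta_{ij}X_j$ for $X_j\in\mathfrak g_{-1}^{(j)}$ (these lie in the center of $\mathfrak g_0$ because $\mathfrak g$ is semisimple). One then reads off the conformal factor $\eta_{E_i}$ on different blocks. If some $\mathfrak g_{-1}^{(1)}$ is nondegenerate, evaluating on $\mathfrak g_{-1}^{(1)}\times\mathfrak g_{-1}^{(1)}$ gives $\eta_{E_1}=-2$, while evaluating $\eta_{E_j}$ for any other constituent on that same block gives $0$; this forces $k=1$. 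If all constituents are isotropic and $\mathfrak g_{-1}^{(1)}$ pairs with $\mathfrak g_{-1}^{(2)}$, set $E_1'=E_1+E_2$: the factor $\eta_{E_1'}$ is $-2$ on $\mathfrak g_{-1}^{(1)}\times\mathfrak g_{-1}^{(2)}$ but $0$ on any further pair $\mathfrak g_{-1}^{(3)}\times\mathfrak g_{-1}^{(k)}$, so $k=2$. The same computation run over $\mathbb C$ handles (1) and gives (iv) in (2).

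Your toolbox does not contain this mechanism, and the substitutes you list do not close the gap. Schur's lemma gives the pairing structure (each constituent is either self-paired or matched with its contragredient) but says nothing about how many pairs occur. Proposition \ref{prop24} describes each real constituent as a $\nu$-orbit in $\Pi_1$ but does not bound $|\Pi_1|$. The rank inequality $\rank_{\mathbb R}\mathfrak g\le\min\{r,s\}+1$ goes the wrong way for this purpose. And the remark that the invariant form on a self-contragredient piece is ``typically alternating'' is neither precise nor true here: class (SI) consists exactly of the cases where a single self-contragredient constituent carries a nondegenerate \emph{symmetric} invariant form. What is missing is a direct use of the conformal condition $\ad(\mathfrak g_0)|\mathfrak g_{-1}\subset\mathfrak{co}(\mathfrak g_{-1},g)$ against elements of $\mathfrak g_0$ that separate the constituents --- and once you write that down you have recovered the paper's $E_i$ argument.
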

\begin{proof}
(1) and (2). 
We decompose $\mathfrak g_{-1}$ (resp. $\mathfrak g_{-1}(\mathbb C)$) 
into irreducible $\mathfrak g_0$-modules 
(resp. $\mathfrak g_0(\mathbb C)$-modules) as follows: 
$$\mathfrak g_{-1}=\bigoplus_{i=1}^k\mathfrak g_{-1}^{(i)},\quad 
\mathfrak g_{-1}(\mathbb C)=\bigoplus_{i=1}^{k'}\mathfrak g_{-1}(\mathbb C)^{(i)}
.$$
Let $E_i$ be an element of $\mathfrak g_0$ such that 
$[E_i,X_j]=-\delta_{ij}X_j$ for all $X_j\in\mathfrak g_{-1}^{(j)}$. 
We first assume that $\mathfrak g_{-1}^{(1)}$ is a nondegenerate subspace of 
$(\mathfrak g_{-1},g)$. 
There exist elements $X_1,Y_1$ of $\mathfrak g_{-1}^{(1)}$ such that 
$g(X_1,Y_1)\ne0$. 
Since 
$$g([E_1,X_1],Y_1)+g(X_1,[E_1,Y_1])=\eta_{E_1}g(X_1,Y_1),$$ 
we see that $\eta_{E_1}=-2$. 
For $X_i\in\mathfrak g_{-1}^{(i)}$ $(i\geqq2)$, 
$$g([E_1,X_1],X_i)+g(X_1,[E_1,X_i])=\eta_{E_1}g(X_1,X_i),$$ 
so $g(X_1,X_i)=0$. 
Thus we get $g(\mathfrak g_{-1}^{(1)},\mathfrak g_{-1}^{(i)})=0$ $(i\geqq2)$. 
If there exists a $j\geqq2$ such that 
$\mathfrak g_{-1}^{(j)}$ is a nondegenerate subspace of 
$(\mathfrak g_{-1},g)$, then 
$$0=g([E_j,X_1],Y_1)+g(X_1,[E_j,Y_1])=\eta_{E_j}g(X_1,Y_1)=-2g(X_1,Y_1),$$ 
which is a contradiction. 
Hence $\mathfrak g_{-1}^{(i)}$ $(i\geqq2)$ are 
totally isotropic subspaces of $(\mathfrak g_{-1},g)$. 
Assume that $\mathfrak g_{-1}^{(2)}\ne\{0\}$. 
There exists $j\geqq3$ such that the restriction of $g$ to the space 
$\mathfrak g_{-1}^{(1)}\times \mathfrak g_{-1}^{(j)}$ is nondegenerate. 
We set $E'_2=E_2+E_j$. 
Let $X_2$ (resp. $X_j)$ be an element of $\mathfrak g_{-1}^{(2)}$ 
(resp. $\mathfrak g_{-1}^{(j)}$) such that $g(X_2,X_j)\ne0$. 
Since 
$$g([E'_2,X_2],X_j)+g(X_2,[E'_2,X_j])=\eta_{E'_2}g(X_2,X_j),$$ 
we see that $\eta_{E'_2}=-2$. 
Also 
$$0=g([E'_2,X_1],Y_1)+g(X_1,[E'_2,Y_1])=-2g(X_1,Y_1).$$ 
This is a contradiction. 
Therefore we obtain that $\mathfrak g_{-1}$ is an irreducible $\mathfrak g_0$-module. 
Next we assume that $\mathfrak g_{-1}^{(i)}$ is a totally isotropic subspace of $(\mathfrak g_{-1},g)$. 
Here we may assume that the restriction of $g$ to 
$\mathfrak g_{-1}^{(1)}\times \mathfrak g_{-1}^{(2)}$ is nondegenerate. 
From the above result, 
$\mathfrak g_{-1}^{(2)}$ is a totally isotropic subspace of 
$(\mathfrak g_{-1},g)$ and is contragredient to 
$\mathfrak g_{-1}^{(1)}$ as a $\hat{\mathfrak g}_0$-module. 
If the restriction of $g$ to $\mathfrak g_{-1}^{(1)}\times \mathfrak g_{-1}^{(3)}$ is nondegenerate, $\mathfrak g_{-1}^{(3)}$ 
is contragredient to 
$\mathfrak g_{-1}^{(1)}$ as a $\hat{\mathfrak g}_0$-module, 
so $\mathfrak g_{-1}^{(1)}$ is isomorphic to $\mathfrak g_{-1}^{(3)}$ as 
a $\mathfrak g_0$-module, which is a contradiction. 
Hence $g(\mathfrak g_{-1}^{(1)},\mathfrak g_{-1}^{(3)})=0$. 
Similarly we get $g(\mathfrak g_{-1}^{(2)},\mathfrak g_{-1}^{(3)})=0$. 
There exists $k\geqq4$ such that 
the restriction of $g$ to $\mathfrak g_{-1}^{(3)}\times \mathfrak g_{-1}^{(k)}$ is nondegenerate. 
We set $E'_1=E_1+E_2$. 
Let $X_i$ $(i=1,2,3,k)$ be elements of $\mathfrak g_{-1}^{(i)}$ 
such that $g(X_1,X_2)\ne0$ and $g(X_3,X_k)\ne0$. Since 
$$0=g([E'_1,X_1],X_2)+g(X_1,[E'_1,X_2])=\eta_{E'_1}g(X_1,X_2),$$ 
we get $\eta_{E'_1}=-2$. On the other hand, we see that 
$$0=g([E'_1,X_3],X_k)+g(X_3,[E'_1,X_k])=\eta_{E'_1}g(X_3,X_k),$$ 
which is a contradiction. 
Hence $\mathfrak g_{-1}=\mathfrak g_{-1}^{(1)}\oplus \mathfrak g_{-1}^{(2)}$. 
Similarly we can prove that 
if 
the $\mathfrak g_0(\mathbb C)$-module $\mathfrak g_{-1}(\mathbb C)$ is 
reducible, there exist $\mathfrak g_0(\mathbb C)$-submodules 
$\mathfrak g_{-1}(\mathbb C)^{(i)}$ $(i=1,2)$ such that 
(i)
$\mathfrak g_{-1}(\mathbb C)=\mathfrak g_{-1}(\mathbb C)^{(1)}\oplus
\mathfrak g_{-1}(\mathbb C)^{(2)}$; 
(ii) 
$\mathfrak g_{-1}(\mathbb C)^{(i)}$ $(i=1,2)$ are totally isotropic subspaces 
of $(\mathfrak g_{-1}(\mathbb C),g)$; (iii) 
$\mathfrak g_{-1}(\mathbb C)^{(1)}$ is contragredient to 
$\mathfrak g_{-1}(\mathbb C)^{(2)}$ as a 
$\hat{\mathfrak g}_0(\mathbb C)$-module. 
The assertions (1) and (2) follow from these results. 

(3) We assume that $\mathfrak g$ is not simple. 
There exist ideals $\mathfrak a^{(1)}$ and $\mathfrak a^{(2)}$ of 
$\mathfrak g$ such that $\mathfrak a^{(1)}$ is a simple ideal of 
$\mathfrak g$ and 
$\mathfrak g=\mathfrak a^{(1)}\oplus\mathfrak a^{(2)}$. 
Both ideals $\mathfrak a^{(i)}$ $(i=1,2)$ are graded ideals of $\mathfrak g$; 
we write $\mathfrak a^{(i)}=\bigoplus\limits_{p\in\mathbb Z}\mathfrak a_p^{(i)}$. 
By transitivity of $\gla g$, we see that $\mathfrak a_{-1}^{(i)}\ne\{0\}$ $(i=1,2)$. 
From the results of (1) and (2) 
$\mathfrak a_{-1}^{(2)}$ is contragredient to $\mathfrak a_{-1}^{(1)}$ as a 
$\hat{\mathfrak g}_0$-module, which is a contradiction. 
Hence $\mathfrak g$ is simple. Also 
from  the results of (1) and (2) and from 
\cite[p.157, Example 2]{gov94:0}, it follows that 
$\mathfrak g$ is of class (B). 
\end{proof}
We decompose the conformal pseudo-subriemannian FGLAs of semisimple type 
into the following three classes: 
\begin{enumerate}
\renewcommand{\labelenumi}{(S\Roman{enumi}) }
\item The $\mathfrak g_0(\mathbb C)$-module $\mathfrak g_{-1}(\mathbb C)$ 
is irreducible. 
\item The $\mathfrak g_0$-module $\mathfrak g_{-1}$ 
is irreducible and the $\mathfrak g_0(\mathbb C)$-module $\mathfrak g_{-1}(\mathbb C)$ 
is reducible. 
\item The $\mathfrak g_0$-module $\mathfrak g_{-1}$ is reducible. 
\end{enumerate}

\begin{theorem}
Let $(\mathfrak m,[g])$ be a conformal pseudo-subriemannian FGLA of semisimple type, 
and let $\gla g$ be the prolongation of $(\mathfrak m,[g])$. 
Assume that $(\mathfrak m,[g])$ is of type $(r,s)$ $(r\geqq s)$. 
\begin{enumerate}
\item If $(\mathfrak m,[g])$ is of class ${\rm(SI)}$, 
then $\gla g$ is an SGLA of type 
$(({\rm CI})_{l},\{\alpha_2\})$,  
$(({\rm CIIa})_{l,p},\{\alpha_2\})$, 
$(({\rm CIIb})_{l},\{\alpha_2\})$ $(l\geqq3,p\geqq1)$, 
$({\rm FI},\{\alpha_4\})$, 
or $({\rm FII},\{\alpha_{4}\})$.

\item If $(\mathfrak m,[g])$ is 
of class ${\rm (SII)}$, 
then $\gla g$ is an SGLA of type 
$(({\rm AIIIa})_{l,p},\{\alpha_1,\alpha_l\})$, 
$(({\rm AIIIb})_{l},\{\alpha_1,\alpha_l\})$ 
or 
$(({\rm AIV})_{l},\{\alpha_1,\alpha_l\})$ $(l\geqq2)$. 

\item If $(\mathfrak m,[g])$ is class ${\rm (SIII)}$, 
then $(\mathfrak m,[g])$ is conformal neutral-subriemannian and 
$\gla g$ is an SGLA of type $(({\rm AI})_{l},\{\alpha_1,\alpha_l\})$, $(({\rm BI})_{l,l},\{\alpha_1,\alpha_l\})$ $(l\geqq2)$ or 
$({\rm G_{2(2)}},\{\alpha_1,\alpha_2\})$. 

\end{enumerate}
\label{thm51}
\end{theorem}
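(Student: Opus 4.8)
The plan is to read the theorem off Proposition \ref{prop51}: first pin down the complexification $\mathfrak g(\mathbb C)$ among the complex simple graded Lie algebras, then descend to real forms by inspecting Satake diagrams. By Proposition \ref{prop51}, $\mathfrak g$ is a simple graded Lie algebra of class (B), so $\mathfrak g(\mathbb C)=\bigoplus_{p}\mathfrak g_p(\mathbb C)$ is a complex SGLA of some type $(X_l,\Pi_1)$ whose Satake diagram $S_l$ satisfies $\Pi_1\subset\Pi^\circ$, and $\mathfrak g$ is a real form of it. Since $(\mathfrak m,[g])$ is conformal pseudo-subriemannian we have $\mathfrak g_{-2}(\mathbb C)\ne\{0\}$; moreover, by Lemma \ref{lem31} and the definition of $\hat{\mathfrak g}_0$, the complexification $g_{\mathbb C}$ of $g$ is a nondegenerate symmetric $\mathbb C$-bilinear form on $\mathfrak g_{-1}(\mathbb C)$ invariant under $\hat{\mathfrak g}_0(\mathbb C)$, hence under $\mathfrak g_0(\mathbb C)^{ss}$.

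The identification of $\mathfrak g(\mathbb C)$ then proceeds by cases. In case (SI) the $\mathfrak g_0(\mathbb C)$-module $\mathfrak g_{-1}(\mathbb C)$ is irreducible and not one-dimensional (a one-dimensional $\mathfrak g_{-1}$ would force $\mathfrak g_{-2}=\{0\}$), so $\mathfrak g_0(\mathbb C)^{ss}\ne\{0\}$ and Proposition \ref{prop22}(1) gives $\mathfrak g(\mathbb C)$ of type $(C_l,\{\alpha_2\})$ $(l\geqq3)$ or $(F_4,\{\alpha_4\})$. In cases (SII) and (SIII) the module $\mathfrak g_{-1}(\mathbb C)$ is reducible, and by Proposition \ref{prop51} it is the direct sum of exactly two totally isotropic irreducible $\mathfrak g_0(\mathbb C)$-submodules contragredient to one another; if $\mathfrak g_0(\mathbb C)^{ss}\ne\{0\}$, Proposition \ref{prop22}(2) gives $\mathfrak g(\mathbb C)$ of type $(A_l,\{\alpha_1,\alpha_l\})$ or $(B_l,\{\alpha_1,\alpha_l\})$ $(l\geqq3)$, while if $\mathfrak g_0(\mathbb C)^{ss}=\{0\}$ then removing $\Pi_1$ from the Dynkin diagram of $\mathfrak g(\mathbb C)$ leaves nothing, so $\Pi=\Pi_1$; as $\mathfrak g_{-1}(\mathbb C)$ has exactly two irreducible constituents, these being indexed by $\Pi_1$, the rank of $\mathfrak g(\mathbb C)$ is $2$, and together with $\mathfrak g_{-2}(\mathbb C)\ne\{0\}$ this leaves exactly $(A_2,\{\alpha_1,\alpha_2\})$, $(B_2,\{\alpha_1,\alpha_2\})$, $(G_2,\{\alpha_1,\alpha_2\})$ (the cases of Remark \ref{rem21}). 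Thus in cases (SII)--(SIII) the complex type is $(A_l,\{\alpha_1,\alpha_l\})$ or $(B_l,\{\alpha_1,\alpha_l\})$ with $l\geqq2$, or $(G_2,\{\alpha_1,\alpha_2\})$.

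Next I would enumerate the real forms of each of these complex types, keeping only those whose Satake diagram satisfies $\Pi_1\subset\Pi^\circ$, and sort them by the involution $\nu$ of Proposition \ref{prop24}: when $\nu$ fixes every element of $\Pi_1$, $\mathfrak g_{-1}$ is reducible, so we are in case (SIII); when $\nu$ interchanges the two elements of $\Pi_1$, $\mathfrak g_{-1}$ is irreducible though $\mathfrak g_{-1}(\mathbb C)$ is not, so we are in case (SII). For $(C_l,\{\alpha_2\})$ and $(F_4,\{\alpha_4\})$ the diagram automorphism is trivial and $\Pi_1$ consists of one node; the real forms in which that node is white are $({\rm CI})_l$, $({\rm CIIa})_{l,p}$, $({\rm CIIb})_l$ and ${\rm FI}$, ${\rm FII}$, yielding assertion (1). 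For $(A_l,\{\alpha_1,\alpha_l\})$ the forms with $\alpha_1,\alpha_l$ white are the split form $\mathfrak{sl}(l+1,\mathbb R)=({\rm AI})_l$ with $\nu=\mathrm{id}$ (case (SIII)) and the forms $\mathfrak{su}(p,q)$ with $p\geqq q\geqq1$, for which $\nu$ interchanges $\alpha_1$ and $\alpha_l$ (case (SII)), namely $({\rm AIIIa})_{l,p}$, $({\rm AIIIb})_l$, $({\rm AIV})_l$; the form $\mathfrak{sl}(m,\mathbb H)$ of type ${\rm AII}$ is discarded because there $\alpha_1,\alpha_l\in\Pi^\bullet$. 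For $(B_l,\{\alpha_1,\alpha_l\})$ one has $\nu=\mathrm{id}$ (case (SIII)), and $\alpha_l$ is white only for the split form $\mathfrak{so}(l+1,l)=({\rm BI})_{l,l}$. The three rank-two cases reduce to members of these families with $l=2$, together with $({\rm G_{2(2)}},\{\alpha_1,\alpha_2\})$; this establishes assertions (2) and (3). Finally, in case (SIII), Proposition \ref{prop51}(2) presents $\mathfrak g_{-1}$ as an orthogonal direct sum of two totally isotropic subspaces, necessarily of equal dimension since $g$ is nondegenerate, so $g$ has neutral signature and $(\mathfrak m,[g])$ is conformal neutral-subriemannian. (That each type in the three lists is indeed realized follows from Examples \ref{exa41}--\ref{exa44} together with Lemma \ref{lem33}.)

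The module-theoretic steps, notably reading off the reducibility of $\mathfrak g_{-1}$ from $\nu$ through Proposition \ref{prop24}, are routine. The main obstacle is the last step, the case-by-case analysis of Satake diagrams: for each real form deciding precisely which simple roots lie in $\Pi^\bullet$, discarding the forms with $\Pi_1\not\subset\Pi^\circ$ (type ${\rm AII}$, and the non-split orthogonal algebras $\mathfrak{so}(p,q)$ with $\{p,q\}\ne\{l,l+1\}$), and checking that the rank-two exceptional cases coincide with members of the general families rather than producing new entries.
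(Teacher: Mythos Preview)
Your proposal is correct and follows essentially the same route as the paper: reduce to the complex SGLA via Proposition~\ref{prop51}, identify the complex type through Proposition~\ref{prop22} (together with the rank-two cases of Remark~\ref{rem21}), and then sort the admissible real forms by Satake diagram using Proposition~\ref{prop24}. Your treatment is slightly more explicit than the paper's in two places---the argument that $\mathfrak g_0(\mathbb C)^{ss}\ne\{0\}$ in case~(SI), and the direct derivation of neutral signature in case~(SIII) from the totally isotropic decomposition---whereas the paper subsumes these into the final appeal to the Satake tables.
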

\begin{proof} 
By Proposition \ref{prop51} the complexification 
$\mathfrak g(\mathbb C)
=\bigoplus\limits_{p\in\mathbb Z}\mathfrak g_p(\mathbb C)$ 
is an SGLA. 
We first assume that $(\mathfrak m,[g])$ is of class (SI); then 
$\mathfrak g(\mathbb C)
=\bigoplus\limits_{p\in\mathbb Z}\mathfrak g_p(\mathbb C)$ be of 
type $(X_l,\{\alpha_i\})$. Furthermore 
$\mathfrak g_{-1}(\mathbb C)$ is an irreducible 
$\mathfrak g_0(\mathbb C)$-module with highest weight $-\alpha_i$ and 
there exists a $\mathfrak g^{ss}_0(\mathbb C)$-invariant symmetric bilinear form $g$ on $\mathfrak g_{-1}(\mathbb C)$. 
By Proposition \ref{prop22} (1) 
we obtain that $(X_l,\{\alpha_i\})$ is one of 
$(C_{l},\{\alpha_2\})$ $(l\geqq3)$, 
$(F_4,\{\alpha_4\})$. 
Next we assume that $(\mathfrak m,[g])$ is of class (SII) or (SIII). 
By Proposition \ref{prop51} (2), the $\mathfrak g_0(\mathbb C)$-module 
$\mathfrak g_{-1}(\mathbb C)$ is decomposed as follows: 
$\mathfrak g_{-1}(\mathbb C)=\mathfrak g_{-1}(\mathbb C)^{(1)}\oplus 
\mathfrak g_{-1}(\mathbb C)^{(2)}$, 
where $\mathfrak g_{-1}(\mathbb C)^{(i)}$ ($i=1,2)$ are 
irreducible $\mathfrak g_{0}(\mathbb C)$-submodule of 
$\mathfrak g_{-1}(\mathbb C)$ such that: (i) 
each $\mathfrak g_{-1}(\mathbb C)^{(i)}$ is totally isotropic with respect to $g$; (ii) 
$\mathfrak g_{-1}(\mathbb C)^{(1)}$ is contragredient to 
$\mathfrak g_{-1}(\mathbb C)^{(2)}$ as a $\hat{\mathfrak g}_0(\mathbb C)$-module.  
By Proposition \ref{prop22} (2) and Remark \ref{rem21}, 
we obtain that $\mathfrak g(\mathbb C)
=\bigoplus\limits_{p\in\mathbb Z}\mathfrak g_p(\mathbb C)$ 
is of type $(A_{l},\{\alpha_1,\alpha_l\})$, $(B_{l},\{\alpha_1,\alpha_l\})$ or 
$(G_{2},\{\alpha_1,\alpha_2\})$. 
Hence the assertions (1)--(3) follow from Proposition \ref{prop24}, 
and the tables of \cite[pp.79--82]{0ni04:0} and \cite[pp.30--32]{war72:0}. 
\end{proof}

\begin{proposition}
Let $(\mathfrak m,[g_1])$ and $(\mathfrak m,[g_2])$ be two conformal pseudo-subriemannian FGLAs of semisimple type. If the prolongation of 
$(\mathfrak m,[g_1])$ coincides with that of $(\mathfrak m,[g_2])$, 
then $(\mathfrak m,[g_1])$ is equivalent to $(\mathfrak m,[g_2])$. 
\label{prop52}
\end{proposition}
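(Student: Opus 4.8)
The plan is to reduce the statement to the claim that $g_2$ is a nonzero real multiple of $g_1$. Indeed, once we know $g_2=\lambda g_1$ with $\lambda\in\mathbb R$, $\lambda\ne0$, then for $\lambda>0$ we have $[g_2]=[g_1]$, so the identity map of $\mathfrak m$ is an isomorphism of $(\mathfrak m,[g_1])$ onto $(\mathfrak m,[g_2])$; for $\lambda<0$ we have $[g_2]=[-g_1]$, so the identity is an isomorphism of $(\mathfrak m,[g_1])$ onto $(\mathfrak m,[-g_2])$; in either case $(\mathfrak m,[g_1])$ is equivalent to $(\mathfrak m,[g_2])$. To prove the claim, write $\gla g$ for the common prolongation; in particular both constructions produce the same degree-zero part $\mathfrak g_0$, and by Lemma \ref{lem31}(1) the subalgebra $\hat{\mathfrak g}_0=[\mathfrak g_0,\mathfrak g_0]$ acts on $\mathfrak g_{-1}$ through $\mathfrak{so}(\mathfrak g_{-1},g_i)$ for $i=1,2$, i.e.\ $g_1$ and $g_2$ are both $\hat{\mathfrak g}_0$-invariant. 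Define $T\in\End_{\mathbb R}(\mathfrak g_{-1})$ by $g_2(x,y)=g_1(Tx,y)$; then $T$ is self-adjoint for $g_1$, and since $\ad(A)|\mathfrak g_{-1}$ is skew for both $g_1$ and $g_2$ when $A\in\hat{\mathfrak g}_0$, a one-line computation gives that $T$ commutes with $\ad(A)|\mathfrak g_{-1}$ for all $A\in\hat{\mathfrak g}_0$. It remains to show that $T$ is a nonzero scalar.

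Here I would invoke Proposition \ref{prop51} and treat separately the three classes. In class ${\rm(SI)}$ the $\mathfrak g_0(\mathbb C)$-module $\mathfrak g_{-1}(\mathbb C)$ is irreducible; since the centre of $\mathfrak g_0(\mathbb C)$ acts by scalars, $\mathfrak g_{-1}(\mathbb C)$ is irreducible over $\hat{\mathfrak g}_0(\mathbb C)$ as well, so $\mathfrak g_{-1}$ is a $\hat{\mathfrak g}_0$-module of real type with $\End_{\hat{\mathfrak g}_0}(\mathfrak g_{-1})=\mathbb R$; hence $T=\lambda\,\mathrm{id}$. In classes ${\rm(SII)}$ and ${\rm(SIII)}$, Proposition \ref{prop51} (together with Theorem \ref{thm51}) gives a decomposition $\mathfrak g_{-1}(\mathbb C)=M^{(1)}\oplus M^{(2)}$ (resp.\ $\mathfrak g_{-1}=V^{(1)}\oplus V^{(2)}$) into two irreducible, mutually contragredient, totally isotropic submodules whose highest weights $-\gamma_1,-\gamma_2$ are distinct; thus these pieces are the $\mathfrak g_0$-isotypic components and are intrinsic to $\gla g$, independent of the choice of $g_i$. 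Total isotropy of each piece for both $g_1$ and $g_2$, together with nondegeneracy, forces $T$ to preserve each piece, and on each piece $T$ is a scalar by Schur's lemma, the pieces being absolutely irreducible over $\hat{\mathfrak g}_0$ (or one-dimensional when $\hat{\mathfrak g}_0=\{0\}$, as for $(({\rm AI})_2,\{\alpha_1,\alpha_2\})$). Symmetry of $g_1$ and $g_2$ then identifies the two scalars with a common value $\lambda$; in class ${\rm(SII)}$ one uses in addition that the conjugation $\sigma$ interchanges $M^{(1)}$ and $M^{(2)}$ and that $T$ is defined over $\mathbb R$, which forces $\lambda\in\mathbb R$. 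Since $g_2$ is nondegenerate, $\lambda\ne0$, so $g_2=\lambda g_1$ and the claim follows.

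The step I expect to demand the most care is the concluding one in classes ${\rm(SII)}$ and ${\rm(SIII)}$: verifying that the two pieces really are absolutely irreducible over $\hat{\mathfrak g}_0$ (so that Schur over $\mathbb C$ yields a scalar rather than a larger division algebra), and then bookkeeping how the symmetry of $g_i$ and the real structure on $\mathfrak g_{-1}(\mathbb C)$ together make the two scalars equal and real. The low-rank cases where $\hat{\mathfrak g}_0$ degenerates all the way to $\{0\}$ must be inspected by hand, using that the two pieces are then necessarily one-dimensional, so that $T$ is automatically scalar on each and only the matching of the two scalars via symmetry remains.
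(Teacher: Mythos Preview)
Your argument is essentially the same as the paper's: define the intertwiner $T=g_1^\sharp\circ g_2^\flat$, observe that it commutes with the action of (a codimension-one piece of) $\mathfrak g_0$, and then apply Schur's lemma on the irreducible constituents of $\mathfrak g_{-1}(\mathbb C)$, matching the two scalars via the symmetry of the $g_i$ and the pairing between the two pieces. One small slip: Lemma~\ref{lem31}(1) only gives $[\mathfrak g_0,\mathfrak g_0]\subset\hat{\mathfrak g}_0$, not equality; but since your argument only needs that both $g_1$ and $g_2$ are invariant under $[\mathfrak g_0,\mathfrak g_0]$, and since $E$ acts by the scalar $-1$ on $\mathfrak g_{-1}$ (so irreducibility and isotypic decompositions over $\mathfrak g_0$, $\hat{\mathfrak g}_0$, and $[\mathfrak g_0,\mathfrak g_0]\oplus\mathbb R E$ all coincide), this is harmless.
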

\begin{proof} 
The mapping $\varphi=g_1^\sharp\comp g_2^\flat$ induces an isomorphism of $\mathfrak g_{-1}(\mathbb C)$ onto itself as a 
$\hat{\mathfrak g}_0(\mathbb C)$-module. 
If the $\mathfrak g_{0}(\mathbb C)$-module $\mathfrak g_{-1}(\mathbb C)$ is 
reducible, 
then $\mathfrak g_{-1}(\mathbb C)$ is the direct sum of two irreducible 
$\mathfrak g_{0}(\mathbb C)$-modules $\mathfrak g_{-1}(\mathbb C)^{(i)}$ 
$(i=1,2)$ and $\mathfrak g_{-1}(\mathbb C)^{(1)}$ is not isomorphic to  
$\mathfrak g_{-1}(\mathbb C)^{(2)}$ as a $\mathfrak g_0(\mathbb C)$-module. 
In this case $\varphi(\mathfrak g_{-1}(\mathbb C)^{(i)})=\mathfrak g_{-1}(\mathbb C)^{(i)}$ $(i=1,2)$. 
By Schur's lemma, there exist two complex numbers $\lambda_1,\lambda_2$ such that $\varphi|\mathfrak g_{-1}(\mathbb C)^{(i)}=\lambda_i id$ $(i=1,2)$. 
For $X\in \mathfrak g_{-1}(\mathbb C)^{(1)}$ and 
$Y\in \mathfrak g_{-1}(\mathbb C)^{(2)}$ we obtain 
$\lambda_1g_1(X,Y)=g_1(\varphi(X),Y)=g_2(X,Y)$ and 
$\lambda_2g_1(X,Y)=g_1(X,\varphi(Y))=g_2(X,Y)$. 
Since $\mathfrak g_{-1}(\mathbb C)^{(i)}$ are totally isotropic with respect to $g_1$ and $g_2$, 
we get $\lambda_1=\lambda_2$. 
Hence $g_2=\lambda_1g_1$ and $\lambda_i\in\mathbb R$. 
Thus we see that $[g_1]=[g_2]$ or $[g_1]=[-g_2]$. 
Similarly we can prove that $[g_1]=[g_2]$ or $[g_1]=[-g_2]$ when 
the $\mathfrak g_{0}(\mathbb C)$-module $\mathfrak g_{-1}(\mathbb C)$ is 
irreducible. 
\end{proof}
From Theorem \ref{thm51}, Proposition \ref{prop52} and the results of \S4 
we obtain the following theorem. 
\begin{theorem}
Let $(\mathfrak m,[g])$ be a conformal pseudo-subriemannian FGLA of semisimple type. 
Then $(\mathfrak m,[g])$ is equivalent to one of conformal pseudo-subriemannian FGLAs of types 
$({\rm H}\mathbb C)_{p,q}$, $({\rm H}\mathbb C')_{p,q}$, 
$({\rm H}\mathbb H)_{p,q}$, $({\rm H}\mathbb H')_{p,q}$, 
$({\rm H}\mathbb O)$, $({\rm H}\mathbb O')$, $({\rm BI})_l$, $({\rm G})$. 
The prolongation $\gla g$ of $(\mathfrak m,[g])$ and the signature $(r,s)$ of $g$ are given in the following table. 
\label{thm52}
\end{theorem}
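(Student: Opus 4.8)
The plan is to assemble the theorem from three pieces already established: Theorem \ref{thm51}, which pins down the isomorphism type of the prolongation $\mathfrak g$; the explicit models of Section 4 together with Lemma \ref{lem33}, which realize every type occurring in Theorem \ref{thm51} by a concrete conformal pseudo-subriemannian FGLA whose prolongation is the stated real simple graded Lie algebra; and Proposition \ref{prop52}, which shows that the conformal class $[g]$ is determined, up to positive scaling and sign, by the prolongation. By Proposition \ref{prop51} the prolongation of a conformal pseudo-subriemannian FGLA of semisimple type is simple, so we may argue type by type.

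First I would apply Theorem \ref{thm51}: after replacing $g$ by $-g$ if necessary, so that the type $(r,s)$ of $(\mathfrak m,[g])$ satisfies $r\geqq s$, the prolongation $\mathfrak g$ is a real SGLA of one of the types listed there, according to which of the classes ${\rm(SI)}$, ${\rm(SII)}$, ${\rm(SIII)}$ the FGLA belongs to. Then, for each such type, I would point to the model of Section 4 realizing it: Example \ref{exa41} with $\mathbb K=\mathbb H$ (resp.\ $\mathbb H'$) yields the types $(({\rm CIIa})_{l,p},\{\alpha_2\})$ and $(({\rm CIIb})_l,\{\alpha_2\})$ (resp.\ $(({\rm CI})_l,\{\alpha_2\})$), and with $\mathbb K=\mathbb C$ (resp.\ $\mathbb C'$) yields $(({\rm AIIIa})_{l,p},\{\alpha_1,\alpha_l\})$, $(({\rm AIIIb})_l,\{\alpha_1,\alpha_l\})$, $(({\rm AIV})_l,\{\alpha_1,\alpha_l\})$ (resp.\ $(({\rm AI})_l,\{\alpha_1,\alpha_l\})$); Example \ref{exa43} with $\mathbb K=\mathbb O$ (resp.\ $\mathbb O'$) yields $({\rm FII},\{\alpha_4\})$ (resp.\ $({\rm FI},\{\alpha_4\})$); Example \ref{exa42} yields $(({\rm BI})_{l,l},\{\alpha_1,\alpha_l\})$; and Example \ref{exa44} yields $({\rm G}_{2(2)},\{\alpha_1,\alpha_2\})$. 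In every case Lemma \ref{lem33} identifies the prolongation of the model with exactly that SGLA. One then checks that these models exhaust the type list of Theorem \ref{thm51}, taking care of the parameter ranges in which the construction genuinely produces a conformal pseudo-subriemannian FGLA---in particular $\mathfrak g_{-2}\ne\{0\}$---namely $p\geqq1$, $q\geqq0$, $2p+q\geqq3$ for type $({\rm H}\mathbb K)_{p,q}$, $l\geqq2$ for $({\rm BI})_l$, and the low-rank coincidences among the Dynkin diagrams.

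Given a model $(\mathfrak m_0,[g_0])$ from Section 4 whose prolongation is an SGLA of the same type $(S_l,\Pi_1)$ as $\mathfrak g$, I would conclude as follows. A real SGLA is determined up to graded Lie algebra isomorphism by its type $(S_l,\Pi_1)$, so $\mathfrak g$ is graded-isomorphic to the prolongation of the model; restricting such an isomorphism identifies $\mathfrak m$ with $\mathfrak m_0$ as FGLAs, and transporting $[g_0]$ along it produces a conformal class $[\tilde g_0]$ on $\mathfrak m$ for which $(\mathfrak m,[\tilde g_0])$ is isomorphic to the model and has the same prolongation as $(\mathfrak m,[g])$, namely $\mathfrak g$. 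Proposition \ref{prop52} then yields $[g]=[\tilde g_0]$ or $[g]=[-\tilde g_0]$, so $(\mathfrak m,[g])$ is equivalent to the model. (If desired, an automorphism of $\mathfrak m$ can be used beforehand so that the two copies of $\mathfrak g_0$ literally coincide.) It remains to record the table, obtained by reading off, from the example matched to $\mathfrak g$, the prolongation as a real simple Lie algebra and the signature of $g$: $\mathfrak{su}(p+q,p,\mathbb C)$ and $(2p+2q-2,2p-2)$ for $({\rm H}\mathbb C)_{p,q}$; $\mathfrak{sl}(2p+q,\mathbb R)$ and $(2p+q-2,2p+q-2)$ for $({\rm H}\mathbb C')_{p,q}$; $\mathfrak{sp}(p+q,p,\mathbb H)$ and $(4p+4q-4,4p-4)$ for $({\rm H}\mathbb H)_{p,q}$; $\mathfrak{sp}(2p+q,\mathbb R)$ and $(4p+2q-4,4p+2q-4)$ for $({\rm H}\mathbb H')_{p,q}$; ${\rm F}_{4(-20)}$ and $(8,0)$ for $({\rm H}\mathbb O)$; ${\rm F}_{4(4)}$ and $(4,4)$ for $({\rm H}\mathbb O')$; $\mathfrak{so}(l+1,l,\mathbb R)$ and $(l-1,l-1)$ for $({\rm BI})_l$; and ${\rm G}_{2(2)}$ and $(1,1)$ for $({\rm G})$.

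The main obstacle is not any isolated deep step---Theorem \ref{thm51}, Proposition \ref{prop52}, and the verifications in Section 4 already carry the analytic and representation-theoretic weight---but the combinatorial bookkeeping of the final matching: one must check that the eight families of models, once restricted to the ranges in which $(\mathfrak m,[g])$ is genuinely a conformal pseudo-subriemannian FGLA, biject with the type list of Theorem \ref{thm51} without omission and without spurious repetition arising from exceptional isomorphisms of Satake diagrams, and in particular that the subcases $p\geqq2,q\geqq1$, then $p\geqq2,q=0$, then $p=1,q\geqq1$ of $({\rm H}\mathbb C)_{p,q}$ correspond respectively to ${\rm(AIIIa)}$, ${\rm(AIIIb)}$, ${\rm(AIV)}$, and likewise for $\mathbb H$ to ${\rm(CIIa)}$, ${\rm(CIIb)}$.
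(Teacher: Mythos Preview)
Your proposal is correct and follows the same strategy as the paper, which simply states that the theorem follows from Theorem~\ref{thm51}, Proposition~\ref{prop52}, and the constructions of \S4. You have spelled out in detail what the paper compresses into one line: match each type from Theorem~\ref{thm51} to a model in \S4, invoke the classification of real SGLAs by Satake data to identify the prolongations, and apply Proposition~\ref{prop52} to pin down the conformal class up to sign.
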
 
%\begin{tabular}{|c|p{2.5cm}|p{2.4cm}|p{2.4cm}|p{2.4cm}|}\hline
\begin{tabular}{|c|p{2.8cm}|p{2.5cm}|p{2.5cm}|p{2.5cm}|}\hline
{\scriptsize $(\mathfrak m,[g])$} 
& {\scriptsize $({\rm H}\mathbb C)_{p,q}$} {\scriptsize $(p\geqq2$, $q\geqq1)$}
& {\scriptsize $({\rm H}\mathbb C)_{p,0}$} {\scriptsize $(p\geqq2)$}
& {\scriptsize $({\rm H}\mathbb C)_{1,q}$} {\scriptsize $(q\geqq1)$}
& {\scriptsize $({\rm H}\mathbb C')_{p,q}$} {\scriptsize $(p\geqq1$, $2p+q\geqq3)$}
 \\ \hline
{\scriptsize $\gla g$} 
& {\scriptsize $(({\rm AIIIa})_{l,p},\{\alpha_1,\alpha_l\})$ $(l=2p+q-1)$}  
& {\scriptsize $(({\rm AIIIb})_{l},\{\alpha_1,\alpha_l\})$ $(l=2p-1)$} 
& {\scriptsize $(({\rm AIV})_{l},\{\alpha_1,\alpha_l\})$ $(l=q+1)$}
& {\scriptsize $(({\rm AI})_{l},\{\alpha_1,\alpha_l\})$ $(l=2p+q-1)$}
 \\ \hline
{\scriptsize $(r,s)$} 
& {\scriptsize $(2p+2q-2,2p-2)$}
& {\scriptsize $(2p-2,2p-2)$}
& {\scriptsize $(2q,0)$}
& {\scriptsize $(2p+q-2,2p+q-2)$}
 \\ \hline\hline

{\scriptsize $(\mathfrak m,[g])$} 
& {\scriptsize $({\rm H}\mathbb H)_{p,q}$} {\scriptsize $(p,q\geqq1)$}
& {\scriptsize $({\rm H}\mathbb H)_{p,0}$} {\scriptsize $(p\geqq2)$}
& {\scriptsize $({\rm H}\mathbb H')_{p,q}$} {\scriptsize $(p\geqq1$, $2p+q\geqq3)$}
& {\scriptsize $({\rm H}\mathbb O)$} 
\\ \hline 
{\scriptsize $\gla g$} 
& {\scriptsize $(({\rm CIIa})_{l,p},\{\alpha_2\})$ 
$(l=2p+q)$} 
& {\scriptsize $(({\rm CIIb})_{l},\{\alpha_2\})$ $(l=2p)$} 
& {\scriptsize $(({\rm CI})_{l},\{\alpha_2\})$ $(l=2p+q)$}  
& {\scriptsize $({\rm FII},\{\alpha_{4}\})$} 
 \\ \hline

{\scriptsize $(r,s)$} 
& {\scriptsize $(4p+4q-4,4p-4)$}
& {\scriptsize $(4p-4,4p-4)$}
& {\scriptsize $(4p+2q-4,4p+2q-4)$}
& {\scriptsize $(8,0)$} 
 \\ \hline \hline
{\scriptsize $(\mathfrak m,[g])$} 
&  {\scriptsize $({\rm H}\mathbb O')$} 
& {\scriptsize $({\rm BI})_l$} {\scriptsize $(l\geqq2)$}
&  {\scriptsize $({\rm G})$} 
& 
\\ \hline
{\scriptsize $\gla g$}
& {\scriptsize $({\rm FI},\{\alpha_{4}\})$}
& {\scriptsize $(({\rm BI})_{l,l},\{\alpha_1,\alpha_l\})$} 
& {\scriptsize $({\rm G_{2(2)}},\{\alpha_1,\alpha_2\})$} 
& 
\\ \hline
{\scriptsize $(r,s)$} 
& {\scriptsize $(4,4)$}
& {\scriptsize $(l-1,l-1)$}
& {\scriptsize $(1,1)$} 
& 
 \\ \hline
\end{tabular}

\begin{corollary}
Let $(\mathfrak m,[g])$ be a conformal pseudo-subriemannian FGLA of semisimple type. 
Unless $(\mathfrak m,[g])$ is equivalent to one of 
$({\rm H}\mathbb C)_{p,q}$ $(p\geqq2$, $q\geqq1)$, 
$({\rm H}\mathbb C)_{1,q}$ $(q\geqq1)$, 
$({\rm H}\mathbb H)_{p,q}$ $(p,q\geqq1)$, 
$({\rm H}\mathbb O)$, it is conformal neutral-subriemannian. 
\label{cor51}
\end{corollary}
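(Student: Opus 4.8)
The plan is to obtain this as an immediate consequence of the classification in Theorem~\ref{thm52}, using only the observation that a conformal pseudo-subriemannian FGLA $(\mathfrak m,[g])$ of type $(r,s)$ is conformal neutral-subriemannian exactly when $r=s$.

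First I would apply Theorem~\ref{thm52} to reduce to the families listed there, reading off the signature $(r,s)$ of $g$ from the accompanying table. Then I would inspect the table case by case. For the families that do not appear in the exceptional list of the corollary — that is, $({\rm H}\mathbb C)_{p,0}$ $(p\geqq2)$, $({\rm H}\mathbb C')_{p,q}$, $({\rm H}\mathbb H)_{p,0}$ $(p\geqq2)$, $({\rm H}\mathbb H')_{p,q}$, $({\rm H}\mathbb O')$, $({\rm BI})_l$ and $({\rm G})$ — the table gives the respective signatures
$$(2p-2,2p-2),\quad (2p+q-2,2p+q-2),\quad (4p-4,4p-4),\quad (4p+2q-4,4p+2q-4),\quad (4,4),\quad (l-1,l-1),\quad (1,1),$$
all of the form $(r,r)$; hence each of these is conformal neutral-subriemannian, which is what the corollary asserts. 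I would also remark, for completeness, that the four families appearing in the exceptional list are genuinely excluded: $({\rm H}\mathbb C)_{p,q}$ with $p\geqq2$, $q\geqq1$ has $r-s=2q>0$; $({\rm H}\mathbb C)_{1,q}$ has signature $(2q,0)$ with $q\geqq1$; $({\rm H}\mathbb H)_{p,q}$ with $p,q\geqq1$ has $r-s=4q>0$; and $({\rm H}\mathbb O)$ has signature $(8,0)$. So the exceptional list is in fact precisely the set of non-neutral types.

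The only delicate point — and essentially the sole obstacle — is correct bookkeeping of the parameter ranges in Theorem~\ref{thm52}: one must note that $({\rm H}\mathbb C)_{p,0}$ and $({\rm H}\mathbb H)_{p,0}$ occur as the $q=0$ rows of the table and thus fall under the neutral case rather than the exceptional list, and that $({\rm H}\mathbb H)_{1,q}$, which is conformal subriemannian rather than neutral, is the $p=1$ member of the family $({\rm H}\mathbb H)_{p,q}$ $(p,q\geqq1)$ that the corollary does exclude. Apart from this, there is no substantive work: the statement follows directly from the table of Theorem~\ref{thm52}.
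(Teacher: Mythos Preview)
Your proposal is correct and follows essentially the same approach as the paper: the corollary is stated immediately after Theorem~\ref{thm52} with no separate proof, since it is obtained by simply reading off the signature column of the table. Your case-by-case verification and the remark on parameter ranges are accurate and, if anything, more explicit than what the paper provides.
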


\begin{example}
Let $\mathfrak l=\mathfrak l_{-1}\oplus \mathfrak l_0\oplus 
\mathfrak l_1$ be a real SGLA 
such that 
$\mathfrak l_{-1}\ne\{0\}$. 
We assume that $\mathfrak l$ is splittable and $\rank \mathfrak l\geqq2$. 
Let $S=\bigoplus\limits_{p<0} S_p$ be a faithful irreducible graded 
$\mathfrak l$-module such that $S$ is isomorphic to $\mathfrak l$ 
as an $\mathfrak l$-module and such that $S_{-1}\ne\{0\}$. 
Let $\mathfrak t$ be the semidirect product of $\mathfrak l$ by $S$. 
Here $S$ considers as a commutative Lie algebra. 
We define a gradation $(\mathfrak t_p)$ of $\mathfrak t$ as follows: 
$$\mathfrak t_p=\mathfrak l_p \quad (p\geqq0), \quad 
\mathfrak t_{-1}=\mathfrak l_{-1}\oplus S_{-1}, \quad 
\mathfrak t_q=S_q\quad  (q\leqq-2).$$
Then $\gla t$ becomes a GLA such that the negative part $\mathfrak m$ is 
an FGLA of the third kind. 
By assumption $\mathfrak l_{-1}$ is contragredient to 
$S_{-1}$ as a $\mathfrak t_{0}$-module. That is, 
there exists a $\mathfrak t_0$-module isomorphism $\varphi$ 
of $\mathfrak l_{-1}$ onto $S_{-1}^*$.  
We define a symmetric bilinear form $g$ on $\mathfrak t_{-1}$ as follows: 
$$g(X,Y)=g(Z,W)=0,\quad g(X,Z)=g(Z,X)=\langle \varphi(X),Z\rangle\quad 
(X,Y\in\mathfrak l_{-1},\ Z,W\in S_{-1}).$$
Then $g$ is nondegenerate, and hence 
$(\mathfrak m,[g])$ becomes a conformal neutral-subriemannian FGLA. Clearly $\mathfrak t$ is contained in the prolongation $\gla g$ of 
$(\mathfrak m,[g])$. 
If $\gla g$ is of type $(({\rm BI})_{l,l},\{\alpha_1,\alpha_l\})$ $(l\geqq3)$, 
then $\{~X\in\mathfrak g_{-1}:[X,\mathfrak g_{-2}]=\{0\}~\}=\{0\}$, which is a contradiction. 
By Theorem \ref{thm52}, $(\mathfrak m,[g])$ is not of semisimple type and 
$\mathfrak g_1\ne\{0\}$. 
\label{exa51}
\end{example}
%\section*{Reference}
 

\begin{thebibliography}{[SSY96]}  
\bibitem{amt06:01}
D.~V.~Alekseevsky, C.~Medori, and A.~Tomassini, 
Maximally homogeneous para-CR manifolds, 
Ann. Glob. Anal. Geom. \textbf{30} (2006) 1--27. 

\bibitem{BA02:1} 
U.~Bader and A.~Nevo, 
Conformal actions of semisimple Lie groups on 
compact pseudo-Riemannian manifolds, 
J.~Differential Geometry {\bf 60} (2002) 355--387. 

\bibitem{bou68:1} 
N.~Bourbaki, 
Groupes et alg\`ebres de Lie, Chapitres 4, 5 et 6, 
Hermann Paris, 1968. 

\bibitem{bou75:1} 
N.~Bourbaki, 
Groupes et alg\`ebres de Lie, Chapitres 7 et 8, 
Diffusion CCLS, 1975. 

\bibitem{co15:01}
M.~G.~Cowling and A.~Ottazzi, 
Conformal maps of Carnot groups, 
Ann. Acad.  Sci. Fennica, 
Mathematica 
\textbf{40}, (2015) 203--213.

\bibitem{cs09:0} 
A.~\v{C}ap and J.~Slavak, 
Parabolic geometries I, background and general theory, 
AMS, 2009.

\bibitem{doj82:0} 
D.~Djokovic, 
Classification of $\mathbb Z$-graded real semisimple Lie algebras, 
J. of Algebra \textbf{76}, (1982) 367--382. 

\bibitem{gom96:0} 
S.~Gomyo, 
Realization of the exceptional simple graded Lie algebras of the second kind, 
Algebras, groups and geometries \textbf{13}, (1996) 431--464.


\bibitem{gov94:0}
V.~V.~Gorbatsevic, A. L.~Onishchik, and E.~B.~Vinberg, 
Structure of Lie groups and Lie algebras, 
in: Lie Groups and Lie Algebras III, Encyclopedia Math. Sci. \textbf{41}, 
Springer-Verlag 
Berlin, 1994. 

\bibitem{kob72:1} 
S.~ Kobayashi, 
Transformation groups in differential geometry, 
Springer, 1972. 

\bibitem{0ni04:0}
A.~L.~Onishchik, 
Lectures on real semisimple Lie algebras and their representations, 
EMS, 2004. 

\bibitem{tak65:01}
M.~Takeuchi, 
Cell decompositions and Morse equalities on certain symmetric spaces, 
J. Fac. Sci. Univ. Tokyo, \textbf{12} (1965) 81--192. 

\bibitem{tan70:1}
N.~Tanaka,
On differential systems, graded Lie algebras and pseudo-groups, 
J. Math. Kyoto Univ. \textbf{10}, (1970) 1--82. 

\bibitem{war72:0} 
G.~Warner, 
Harmonic analysis on semi-simple Lie groups, I,
Springer-Verlag, New York, 1972.

\bibitem{yam93:1} 
K.~Yamaguchi, 
Differential systems associated with simple graded Lie algebras, 
Advanced Studies in Pure Math. \textbf{22} (1993) 413--494. 


\bibitem{yat15:01} 
T.~Yatsui, 
On conformal subriemannian fundamental graded Lie algebras
and Cartan connections,
Lobachevskii J. of Math. \textbf{36} (2015) 170--178.

\end{thebibliography}
\end{document}